%---------------------------------------------------------------------------
%
% Plasma.tex
%
%---------------------------------------------------------------------------
%\documentstyle{amsart}
\documentclass[10pt]{amsart}
\usepackage{amssymb,amsmath}
\usepackage{hyperref}
\usepackage{bm}
\usepackage[]{graphicx}
\usepackage{amssymb, epsfig}
\usepackage{float}
%
%
%magic command
%
%\usepackage[notref,notcite]{showkeys}
%
%
%%%%%%%%%%%%%%%%%%%%%%%%%%%%%%%%%%%%%%%%%%%%%%%%%%%%%%%%%%%%%%%%%%%%%%%%%%%%%
% pl.tex (plasma.tex definitions)
%%%%%%%%%%%%%%%%%%%%%%%%%%%%%%%%%%%%%%%%%%%%%%%%%%%%%%%%%%%%%%%%%%%%%%%%%%%%%
\newtheorem{thm}{Theorem}[section]
\newtheorem{remm}{Remark}[section]
\newtheorem{lemma}{Lemma}[section]

%%%%%%%%%%%%%%%%%%%%%%%%%%%%%%%%%%%%

\newtheorem{remark}[remm]{Remark}

\newtheorem{proposition}[thm]{Proposition}

\numberwithin{equation}{section}
%

%\newcommand{\mybox}{\square}
%\def\mavro{\vbox{\hrule width 6 pt
%          \hbox{\vrule height 6 pt \vrule height 6 pt \vrule height 6 pt
%            \vrule height 6 pt \vrule height 6 pt \vrule height 6 pt
%            \vrule height 6 pt \vrule height 6 pt \vrule height 6 pt
%            \vrule height 6 pt \vrule height 6 pt \vrule height 6 pt
%            \vrule height 6 pt \vrule height 6 pt \vrule height 6 pt}
%            \hrule width 6 pt}}
%

%------------------------------------------------------------------------------
%--------------------Definitions of math symbols-------------------------------
%------------------------------------------------------------------------------
%
\newcommand{\cset}{{\mathbb C}}

\newcommand{\nset}{{\mathbb N}}

%--------------------------------------

\newcommand{\cunit}{\text{\rm i}}
\newcommand{\ssy}{\scriptscriptstyle}

%

%-------------------------------------
%-------------------------------------
%-------------------------------
%\newtheorem{thm}{Theorem}[section]

%\newtheorem{cor}[thm]{Corollary}

%\newtheorem{prop}[thm]{Proposition}
\newtheorem{rem}[remm]{Remark}
%------------------------------------
\pagestyle{plain}
\setlength{\textheight}{22.0truecm}
\setlength{\textwidth}{16.5truecm}
\setlength{\oddsidemargin}{0.00truecm}
\setlength{\evensidemargin}{0.00truecm}
%
%
%%%%%%%%%%%%%%%%%%%%%%%%%%%%%%%%%%%%%%%%%%%%%%%%%%%%%%%%%%%%%%%%%%%%
%  end of pldef.tex
%%%%%%%%%%%%%%%%%%%%%%%%%%%%%%%%%%%%%%%%%%%%%%%%%%%%%%%%%%%%%%%%%%%%
%
%---------------------------------------------------------------------------
%-------------------------Begin document------------------------------------
%---------------------------------------------------------------------------
%
%\renewcommand{\baselinestretch}{2}
%-------------------------------------
%\journalname{Numerische Mathematik} \smartqed
%
%
\begin{document}
\title{A Finite Difference method for the\\
Wide-Angle `Parabolic' equation in a waveguide\\
with downsloping bottom}
\author{
D.C.~Antonopoulou$^{\dag\P}$,
V.A.~Dougalis$^{\ddag\P}$ and
G.E.~Zouraris$^{\S\P}$}
\thanks
{$^{\ddag}$ Department of Mathematics, University of Athens,
Panepistimiopolis, GR--157 84 Zographou, Greece.}
\thanks
{$^{\dag}$ Department of Applied Mathematics, University of Crete,
GR--714 09 Heraklion, Greece.}
\thanks
{$^{\S}$ Department of Mathematics, University of Crete, GR--714
09 Heraklion, Greece.}
\thanks
{$^{\P}$ Institute of Applied and Computational Mathematics,
FO.R.T.H., GR--711 10 Heraklion, Greece.}
\subjclass{65M06, 65M12, 65M15, 76Q05}
\keywords{wide-angle Parabolic Equation, underwater sound
propagation, variable domains, downsloping bottom,
initial-boundary-value problems, finite difference methods,
error estimates.}
\begin{abstract}
We consider the third-order wide-angle `parabolic' equation of
underwater acoustics in a cylindrically symmetric fluid medium
over a bottom of range-dependent bathymetry. It is known that the
initial-boundary-value problem for this equation may not be well
posed in the case of (smooth) bottom profiles of arbitrary shape
if it is just posed e.g. with a homogeneous Dirichlet bottom
boundary condition. In this paper we concentrate on downsloping
bottom profiles and propose an additional boundary condition that
yields a well posed problem, in fact making it $L^2$-conservative
in the case of appropriate real parameters. We solve the problem
numerically by a Crank-Nicolson-type finite difference scheme,
which is proved to be unconditionally stable and second-order
accurate, and simulates accurately realistic underwater acoustic
problems.
\end{abstract}
\maketitle
%
%
%
%%%%%%%%%%%%%%%%%%%%%%%%%%%%%%%%%%%%%%%%%%%%%%%%%%%%%%%%%%%%%%%%%%%%%%%%
% Section 1
%%%%%%%%%%%%%%%%%%%%%%%%%%%%%%%%%%%%%%%%%%%%%%%%%%%%%%%%%%%%%%%%%%%%%%%%
%
%
%
\section{Introduction}
We consider the third-order wide-angle `parabolic' equation of
underwater acoustics in a cylindrically symmetric fluid medium,
\cite{ref12,Claer,Lee,AkDZ1996}
\begin{equation}\tag{WA}
(1+q\,\beta)\,v_r+\alpha^2\,q\,v_{zzr}={\rm
i}\,\lambda\,(\alpha^2\,v_{zz}+\beta\, v),
\end{equation}
posed for $(r,z)\in\mathcal{D}:=\{(r,z)\in \mathbb{R}^2,\;0\leq
z\leq s(r),\;0\leq r\leq R\}$ for a given $R>0$ and a given bottom
profile $s=s(r)$.
\begin{figure}[htb]
\centering
\includegraphics[width=0.75\textwidth,height=0.20\textheight]{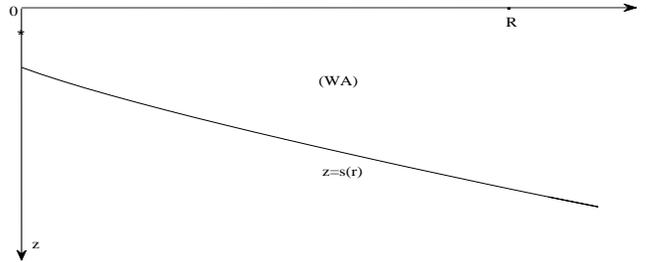}
\vskip-0.8truecm \caption{Domain of validity of (WA).}\label{f1}
\end{figure}
Here $v=v(r,z)$ is a complex-valued function of the range
$r$ and the depth $z$, representing the acoustic field generated
in the fluid medium (`water') $\mathcal{D}$ by a point
time-harmonic source of frequency ${\rm f}$ placed on the
$z$-axis, cf. Figure~\ref{f1}. In (WA) we have put
$\alpha=\frac{1}{k_0}$, where $k_0=\frac{2\pi {\rm f}}{c_0}$ is a
reference wave number and $c_0$ a reference sound speed, and
$\lambda=\frac{p-q}{\alpha}$, where $p$, $q$ are complex
constants. The function $\beta=\beta(r,z)$ is complex-valued and
represents $n^2-1$, where $n$ is the index of refraction of the
medium.
\par
In practice $\beta$ is real- or complex-valued with a small
nonnegative imaginary part modeling attenuation in the water
column. The coefficients $p$, $q$ are such that the rational
function $\frac{1+p\,x}{1+q\,x}$ is an approximation to
$\sqrt{1+x}$ near $x=0$. The p.d.e. (WA) is obtained formally as a
corresponding paraxial approximation of the outgoing
pseudodifferential factor of the Helmholtz equation written in
cylindrical coordinates in the presence of azimuthal symmetry,
\cite{ref12}, \cite{Lee}. The choice $p=\frac{1}{2}$, $q=0$ yields
the standard, narrow-angle Parabolic Equation, \cite{Tapp}, while
the $(1,1)$-Pad\'e approximant  of $\sqrt{1+x}$, given by
$p=\frac{3}{4}$, $q=\frac{1}{4}$, gives the Claerbout equation,
\cite{Claer}. In general, we shall take $p$ and $q$ complex; the
choice $p=q+\frac{1}{2}$, ${\rm Im}(q)<0$, \cite{Co}, has certain
theoretical and numerical advantages as will be seen in the
sequel. Let us also note that although in this paper we have in
mind the application of the (WA) in underwater acoustics
examples, our analytical and numerical methods can also be
applied to wide-angle seismic wave, \cite{Claer,Gr}, and
aeroacoustic \cite{CBJ} wave propagation.
\par
The p.d.e. (WA) is posed as an evolution equation with respect to
the time-like variable $r$ and the space-like variable $z$ in the
domain $\mathcal{D}$, that has a range-dependent bottom described
by $z=s(r)$, where $s$ is a smooth positive function on $[0,R]$.
We shall supplement (WA) by an initial condition modelling the
sound source at $r=0$, i.e. require that
\begin{equation}\label{d1.1}
v(0,z)=v_0(z),\quad 0\leq z\leq s(0),
\end{equation}
where $v_0$ is a given complex-valued function defined in
$[0,s(0)]$, and by the homogeneous Dirichlet boundary conditions
\begin{equation}\label{d1.2}
v(r,0)=0,\quad 0\leq r\leq R,
\end{equation}
\begin{equation}\label{d1.3}
v(r,s(r))=0,\;\;0\leq r\leq R,
\end{equation}
corresponding to a pressure-release surface and an acoustically
soft bottom, respectively.
\par
There is considerable theoretical and numerical evidence to the
effect that the initial-boundary-value problem (ibvp) consisting
of (WA), and \eqref{d1.1}-\eqref{d1.3} is well posed if the
bottom is horizontal or upsloping, i.e. when $\dot{s}(r)\leq 0$
in $[0,R]$, and that it may be ill posed if the bottom is
downsloping, i.e. if $\dot{s}(r)>0$ for $r\in[0,R]$,
\cite{Godin}, \cite{DSZP}, \cite{DSZ2009}, \cite{AnDSZ2008}. In
Refs. \cite{DSZP} and \cite{DSZ2009} an additional bottom
boundary condition was proposed, that together with (WA) and
\eqref{d1.1}-\eqref{d1.3} yields, under certain hypotheses, a
well posed problem for any smooth profile $s$.
\par
In \cite{AnDSZ2008} the authors of the paper at hand, in
collaboration with F.~Sturm, presented other types of additional
bottom boundary conditions that render the resulting ibvp well
posed and in addition, for real $\beta$ and $q$,
$L^2$-conservative, in the sense that
\begin{equation}\label{d1.4}
\int_0^{s(r)}|v(r,z)|^2\;dz=\int_0^{s(0)}|v_0(z)|^2\;dz
\end{equation}
holds for $0\leq r\leq R$. Specifically, it was observed that the
ibvp consisting of (WA) and \eqref{d1.1}-\eqref{d1.3} is
$L^2$-conservative, for real $\beta$ and $q$, if and only if the
following boundary value condition holds
\begin{equation}\label{d1.5}
\text{\rm Im}\left[\,{\mathcal F}(v_z;r,s(r))\,\overline{{\mathcal
F}(v;r,s(r))}\,\right]=0, \quad0\leq{r}\leq R,
\end{equation}
where for $(r,z)\in\mathcal{D}$
\begin{equation}\label{d1.6}
{\mathcal F}(v;r,z):=q\,v_r(r,z)-{\rm i}\,\lambda\, v(r,z),
\end{equation}
provided that $v$ satisfies (WA) and \eqref{d1.1}-\eqref{d1.3}.
Our main motivation for studying boundary conditions for which the
`energy' integral $\int_0^{s(r)}|v(r,z)|^2\;dz$ is conserved (for
real $\beta$ and $q$) in the case of the noncylindricral domain
$\mathcal{D}$ is the fact that this happens for solutions of (WA),
\eqref{d1.1}-\eqref{d1.3} in the case of a horizontal bottom and
also for the standard PE ($p=\frac{1}{2}$, $q=0$) posed on
$\mathcal{D}$ as an ibvp with \eqref{d1.1}-\eqref{d1.3}, for a
general profile $s$ and real $\beta$.
\par
Here, we consider the ibvp (WA), \eqref{d1.1}-\eqref{d1.3} for
a downsloping bottom, with the additional boundary condition
\begin{equation}\label{d1.7}
v_r(r,s(r))=0,\quad 0\leq r\leq R,
\end{equation}
which is equivalent to the condition $v_z(r,s(r))=0$, $r\in[0,R]$,
(in the case of a differentiable bottom with $\dot{s}(r)\neq 0$),
as it seen by differentiating both sides of \eqref{d1.3} with
respect to $r$. Obviously, \eqref{d1.7}, in the presence of
\eqref{d1.3}, is also equivalent to ${\mathcal F}(v;r,s(r))=0$,
i.e. satisfies \eqref{d1.5} and yields an $L^2$-conservative
problem for $\beta$, $q$ real. In section 2 of the present paper,
we prove that the resulting ibvp consisting of (WA),
\eqref{d1.1}-\eqref{d1.3}, \eqref{d1.7} is stable in $L^2$, $H^1$ and $H^2$;
also we show an $H^2$-stability result for the solution of the ibvp (WA),
\eqref{d1.1}-\eqref{d1.3} with an upsloping bottom.
\par
In order to develop a numerical method for ibvp (WA),
\eqref{d1.1}-\eqref{d1.3}, \eqref{d1.7} when the bottom is
downsloping and $q\not=0$, we transform it, using the
range-dependent change of the depth variable $y=\frac{z}{s(r)}$
that renders the bottom horizontal, into an equivalent problem on
the strip $0\leq y\leq 1$, $0\leq r\leq R$.
%
%we consider the ibvp (WA), \eqref{d1.1}-\eqref{d1.3},
%\eqref{d1.7} for a downsloping bottom, and transform it, using the
%range-dependent change of the depth variable $y=z/s(r)$ that
%renders the bottom horizontal, into an equivalent problem on the
%strip $0\leq y\leq 1$, $0\leq r\leq R$.
%
Letting $\Omega:=[0,R]\times[0,1]$, $I:=[0,1]$ and
%
%\begin{equation*}
$u(r,y):=v(r,y\,s(r))$ for $(r,y)\in\Omega$,
%\end{equation*}
%
the ibvp consisting of (WA), \eqref{d1.1}-\eqref{d1.3},
\eqref{d1.7} takes the following form
\begin{equation}\label{NewProblem2}
\begin{split}
&\Lambda(r)\left(u_r-\cunit\,\tfrac{\lambda}{q}\,u-\tfrac{{\dot
s}(r)}{s(r)}\,y\,u_y\right)=\cunit\,\tfrac{\lambda\,s^2(r)}{\alpha^2\,q^2}\,u
\quad\forall\,(r,y)\in\Omega,\\
%
%&\Big[\,-\tfrac{(1+q\,\gamma)\,s^2(r)}{\alpha^2\,q}\,\partial_y^0
%-\partial_y^2\,\Big]\big(u_r-\cunit\,\tfrac{\lambda}{q}\,u-\tfrac{{\dot
%s}(r)}{s(r)}\,y\,u_y\big)=\cunit\,\tfrac{\lambda\,s^2(r)}{\alpha^2\,q^2}\,u
%\quad\forall\,(r,y)\in\Omega,\\
%%%%
&u(r,0)=0\quad\forall\,r\in[0,R],\\
%%%%
&u(r,1)=u_y(r,1)=0\quad\forall\,r\in[0,R],\\
%%%
&u(0,y)=u_0(y):=v_0(y\,s(0))\quad\forall\,y\in I,\\
\end{split}
\end{equation}
where $\gamma(r,y):=b(r,y s(r))$ for $(r,y)\in\Omega$ and, for
$r\in[0,R]$, $\Lambda(r):H^2(I)\cap H^1_0(I)\rightarrow L^2(I)$
is an indefinite one-dimensional elliptic differential operator
in the $y-$variable defined by
\begin{equation}\label{def_Lambda}
\Lambda(r)v:=-v''-\tfrac{(1+q\,\gamma(r,\cdot))\,s^2(r)}{\alpha^2\,q}\,v,
\quad\forall\,v\in H^2(I)\cap H^1_0(I),
\end{equation}
keeping in mind that the term
$u_r-\cunit\,\tfrac{\lambda}{q}\,u-\tfrac{{\dot
s}(r)}{s(r)}\,y\,u_y$  vanish at the endpoint of $I$.
In section 3 we provide some conditions on the data of the problem
that ensure invertibility of $\Lambda(r)$ for $r\in[0,R]$ along
with some regularity properties.
We note that the p.d.e. in \eqref{NewProblem2}, which follows
from (WA) after the aforementioned change of variable, is not a
usual Sobolev-type equation (cf. \cite{Lagnese}) like (WA) over a
horizontal bottom (cf. \cite{AkDZ1996}, \cite{Ak1999}).  Due to
the presence of the term $\frac{{\dot s}(t)}{s(t)}\,y\,u_y$, the
differential equation is of third order with respect to the space
variable $y$ and this offers an explanation of why an additional
boundary condition may be needed.
\par
In section 4, for the approximation to the problem
\eqref{NewProblem2} we propose and analyze a numerical method
that combines Crank-Nicolson time-stepping with a standard
second-order finite difference method in space. We would like to
stress that the convergence analysis of the proposed finite
difference scheme is not a repetition of the corresponding
analysis for the flat bottom case (cf. \cite{Ak1999},
\cite{AkDZ1996}). This is due to the fact that the differential
operator is third order with respect to $y$, which leads to a
truncation error of $O(1)$ at the nodes adjacent to the endpoints
of $I$. Building up a careful consistency argument is important
in proving a second-order error estimate.
\par
Finally, in section 5, we verify the accuracy and stability of
the finite difference scheme by means of numerical experiments and
apply it to solve an underwater acoustics problem in a downsloping
benchmark domain comparing the results with those obtained from
the model proposed in \cite{DSZ2009}.
\par
For rigorous error estimates on other finite difference and finite
element methods for the PE and WA equation on domains with
horizontal or variable bottom we refer the reader e.g. to the
papers \cite{Ak1999}, \cite{AkDZ1996}, \cite{ref8}, \cite{AnDZ},
\cite{AD}, and their references.
%
%%%%%%%%%%%%%%%%%%%%%%%%%%%%%%%%%%%%%%%%%%%%%%%%%%%%%%%%%%%%%%%%%%%%%%
%
%%%%%%%%%%%%%%%%%%%%%%%%%%%%%%%%%%%%%%%%%%%%%%%%%%%%%%%%%%%%%%%%%%%%%%
%
\section{A priori estimates}
\par
Our aim in this section is to establish some \textit{a priori}
estimates for the solution $v$ of the wide-angle equation (WA) on
$\mathcal{D}$ under some or all of the auxiliary conditions
considered in the previous section, and under several hypotheses
on the coefficients and the bottom. In what follows we shall
assume that the functions $v$, $\beta$, $s$, and $v_0$ are
sufficiently smooth so that the various estimates are valid.
%
%(By `stability' in a normed space $\emph{X}$ of functions defined
%on $[0,s(r)]$ for each $r\in[0,R]$, we mean that the solution of
%the ibvp under study satisfies $\displaystyle{\max_{0\leq r\leq
%R}}\|v(r,\cdot)\|_{\emph{X}}\leq C\|v_0\|_{\emph{X}}$ for some
%constant $C$ independent of $v$.
%
Also, in our analysis we shall employ $L^2$, $H^1$ and $H^2$
range-dependent norms on $[0,s(r)]$ which are given, respectively, by
$\|v\|:=(\int_0^{s(r)}|v(r,z)|^2dz)^{\frac{1}{2}}$,
$\|v\|_1:=(\|v\|^2+\|v_z\|^2)^{\frac{1}{2}}$ and
$\|v\|_2:=(\|v\|_1^2+\|v_{zz}\|^2)^{\frac{1}{2}}$.
\par
We begin by establishing some basic identities.
\begin{lemma}\label{ll1}
Let $q\not=0$ and ${\mathcal F}$ be defined by \eqref{d1.6}. If
$v$ satisfies \text{\rm (WA)}, \eqref{d1.1} and \eqref{d1.2},
then, for $0\leq r\leq R$, the following identities hold:
\begin{equation}\label{eq11}
\begin{split}
\tfrac{d}{dr}\|v(r,\cdot)\|^2=&\,\dot{s}(r)|v(r,s(r))|^2
+\tfrac{2}{\lambda}\,\text{\rm
Im}(q)\,\|v_r(r,\cdot)\|^2\\
%%%
&\,-\tfrac{2}{\lambda}\,\int_0^{s(r)}\text{\rm
Im}\left[\beta(r,z)\right]\,|{\mathcal F}(v;r,z)|^2\;dz
%%%
-\tfrac{2\,\alpha^2}{\lambda}\,\text{\rm Im}\left[\,{\mathcal
F}(v_z;r,s(r))
\,\overline{{\mathcal F(v;r,s(r))}}\,\right],\\
\end{split}
\end{equation}
\begin{equation}\label{eq22}
\begin{split}
\tfrac{d}{dr}\|v_z(r,\cdot)\|^2=&\,\dot{s}(r)\,|v_z(r,s(r))|^2
+\tfrac{2}{\alpha^2}\,\text{\rm
Re}\left[\,\int_0^{\ssy s(r)}\tfrac{1+q\,\beta(r,z)}{q}
\,v_r(r,z)\overline{v}(r,z)\;dz\,\right]
-2\,\lambda\,{\rm Im}(\tfrac{1}{q})\|v_z(r,\cdot)\|^2\\
%%%
&\,+2\,\text{\rm Re}\left[\tfrac{1}{q}\,
\,{\mathcal F}(v_z;r,s(r))\,\overline{v(r,s(r))}\,\right]
+\tfrac{2}{\alpha^2}\,\lambda\,\int_0^{\ssy s(r)}\text{\rm
Im}\left(\tfrac{\beta(r,z)}{q}\right)\,|v(r,z)|^2\;dz,
\end{split}
\end{equation}
\begin{equation}\label{eq33}
\begin{split}
\frac{d}{dr}\|v_{zz}(r,\cdot)\|^2=&\dot{s}(r)|v_{zz}(r,s(r))|^2
+\tfrac{2}{\alpha^2}\,{\rm Re}\left[\frac{1}{q}\,\int_0^{\ssy s(r)}
v_r(r,\cdot)\,{\overline {v_{zz}}}(r,z)\;dz\,\right]\\
%%%
&-2\,\lambda\,{\rm Im}\left(\tfrac{1}{q}\right)\|v_{zz}\|^2
-\tfrac{2}{\alpha^2}\,{\rm Re}\left[\tfrac{1}{q}\,\int_0^{\ssy s(r)}
\beta(r,z)\,{\mathcal F}(v;r,z)\;\overline{v_{zz}}(r,z)\;dz\right],
\end{split}
\end{equation}
and
\begin{equation}\label{eq44}
\begin{split}
\int_0^{\ssy s(r)}(1+q\,\beta(r,z))\,|v_r(r,z)|^2\;dz
=&\,q\,\alpha^2\,\|v_{rz}(r,\cdot)\|^2\\
%%%
&+\cunit\,\alpha^2\,\lambda\,\int_0^{\ssy s(r)}
\left[\tfrac{\beta(r,z)}{\alpha^2}\,
v(r,z)+v_{zz}(r,z)\right]\,\overline{v_r(r,z)}\;dz\\
%%%
&\,-q\,\alpha^2\,v_{rz}(r,s(r))\,\overline{v_r(r,s(r))}.
\end{split}
\end{equation}
\end{lemma}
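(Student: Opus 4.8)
The plan is to obtain all four identities from one mechanism: multiply (WA) by the complex conjugate of a suitably chosen quantity, integrate over the range-dependent interval $[0,s(r)]$, integrate by parts in $z$, and (for the first three) combine this with Leibniz differentiation of the corresponding norm. Throughout I would exploit the compact reformulation of (WA), namely $v_r+\beta\,{\mathcal F}(v)+\alpha^2\,{\mathcal F}(v)_{zz}=0$, obtained by regrouping terms through ${\mathcal F}(v)=q\,v_r-{\rm i}\,\lambda\,v$ together with the commutation relations ${\mathcal F}(v_z)={\mathcal F}(v)_z$ and ${\mathcal F}(v_{zz})={\mathcal F}(v)_{zz}$. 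I would also record once and for all that the surface condition \eqref{d1.2} forces $v_r(r,0)=0$ and hence ${\mathcal F}(v;r,0)=0$; these are the only boundary data invoked, which is precisely why the bottom terms at $z=s(r)$ survive in the statements.

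For \eqref{eq11} I would first write, by Leibniz's rule, $\frac{d}{dr}\|v\|^2=\dot s(r)\,|v(r,s(r))|^2+2\,\Ree\int_0^{s(r)}v_r\,\overline v\,dz$, reducing the task to evaluating $\int_0^{s(r)}v_r\,\overline v\,dz$. Multiplying the compact form of (WA) by $\overline{{\mathcal F}(v)}$, integrating, and integrating $\int{\mathcal F}(v)_{zz}\,\overline{{\mathcal F}(v)}\,dz$ by parts once yields the bottom term ${\mathcal F}(v_z;r,s(r))\,\overline{{\mathcal F}(v;r,s(r))}$ (the surface endpoint vanishing since ${\mathcal F}(v;r,0)=0$), a volume term $\int\beta\,|{\mathcal F}(v)|^2\,dz$, and a term $\int|{\mathcal F}(v)_z|^2\,dz$. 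Writing $\int v_r\,\overline{{\mathcal F}(v)}\,dz=\bar q\,\|v_r\|^2+{\rm i}\,\lambda\int v_r\,\overline v\,dz$, I can solve for $\int v_r\,\overline v\,dz$ and take $2\,\Ree$. The decisive point is that the genuinely real quantities $\|v_r\|^2$ and $\int|{\mathcal F}(v)_z|^2\,dz$ enter multiplied by the purely imaginary factor ${\rm i}/\lambda$: the former contributes through $\Ree({\rm i}\,\bar q)=\Imm(q)$, while the latter disappears because $\Ree({\rm i})=0$. Together with $\Ree({\rm i}\,\beta)=-\Imm(\beta)$ and $\Ree({\rm i}\,w)=-\Imm(w)$, this produces exactly the three surviving terms of \eqref{eq11} with the stated constants.

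Identities \eqref{eq22} and \eqref{eq33} follow the same template applied to (WA) divided by $q$, with multipliers $\overline v$ and $\overline{v_{zz}}$ respectively, differentiating $\|v_z\|^2$, resp. $\|v_{zz}\|^2$, by Leibniz. For \eqref{eq22} I integrate by parts the $\int v_{zzr}\,\overline v\,dz$ term, whose bottom contributions $v_{zr}(r,s(r))\,\overline{v(r,s(r))}$ and $-\tfrac{{\rm i}\,\lambda}{q}\,v_z(r,s(r))\,\overline{v(r,s(r))}$ recombine, through $\tfrac1q{\mathcal F}(v_z)=v_{zr}-\tfrac{{\rm i}\,\lambda}{q}v_z$, into the single bottom term $\tfrac1q{\mathcal F}(v_z;r,s(r))\,\overline{v(r,s(r))}$; the volume pieces $\tfrac{{\rm i}\,\lambda}{q}\|v_z\|^2$ and $-\tfrac{{\rm i}\,\lambda}{q}\int\beta\,|v|^2\,dz$ then yield the $\Imm(1/q)$ and $\Imm(\beta/q)$ terms via $\Ree({\rm i}/q)=-\Imm(1/q)$. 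For \eqref{eq33} the target $\int v_{zzr}\,\overline{v_{zz}}\,dz$ requires no integration by parts; the $\beta\,v_r$ and $\beta\,v$ volume integrals combine through ${\mathcal F}$ into the $\int\beta\,{\mathcal F}(v)\,\overline{v_{zz}}\,dz$ term, while the $\|v_{zz}\|^2$ term again issues from $\Ree({\rm i}/q)$.

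Finally, \eqref{eq44} is the most direct and involves no real part: multiplying (WA) by $\overline{v_r}$ and integrating, a single integration by parts of $\int v_{zzr}\,\overline{v_r}\,dz=\int(v_{rz})_z\,\overline{v_r}\,dz$ gives $v_{rz}(r,s(r))\,\overline{v_r(r,s(r))}-\|v_{rz}\|^2$ (the surface endpoint vanishing because $v_r(r,0)=0$), and rearranging reproduces the claimed equality verbatim. I expect the main obstacle throughout to be bookkeeping rather than anything conceptual: correctly assembling the endpoint contributions into the specific ${\mathcal F}$-expressions of the statements, and confirming that every purely-imaginary-times-real contribution (notably $\int|{\mathcal F}(v)_z|^2\,dz$ in \eqref{eq11}) cancels upon taking real parts, so that each identity closes with exactly the listed terms.
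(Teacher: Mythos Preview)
Your proposal is correct and follows essentially the same approach as the paper: multiply (WA) by $\overline{{\mathcal F}(v)}$, $\overline v$, $\overline{v_{zz}}$, $\overline{v_r}$ respectively, integrate in $z$ with one integration by parts, and take the appropriate real or imaginary part (your ``solve for $\int v_r\overline v$ and take $2\,\Ree$'' in \eqref{eq11} is equivalent to the paper's ``take imaginary parts'', since you first divide by ${\rm i}\lambda$). Your compact reformulation $v_r+\beta\,{\mathcal F}(v)+\alpha^2\,{\mathcal F}(v)_{zz}=0$ and the detailed bookkeeping of endpoint terms and real/imaginary cancellations are accurate and simply flesh out what the paper leaves implicit.
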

\begin{proof}
We multiply equation (WA) using \eqref{d1.6} by $\overline{{\mathcal F}(v;r,\cdot)}$,
integrate by parts with respect to $z$ and take imaginary parts to
get \eqref{eq11}. We then multiply (WA) using \eqref{d1.6} by
$\overline{v(r,\cdot)}$, $\overline{v_{zz}(r,\cdot)}$, respectively, integrate by
parts, and take real parts, obtaining \eqref{eq22}, \eqref{eq33},
respectively. The last equality \eqref{eq44} follows, by
multiplying (WA) using \eqref{d1.6} by $\overline{v_{r}(r,\cdot)}$ and
integrating.
\end{proof}
\par
From \eqref{eq11} it follows that if $v$ satisfies (WA),
\eqref{d1.1}, \eqref{d1.2}, and \eqref{d1.3}, and $\beta$ and $q$
are real, then $v$ satisfies the $L^2$ conservation property
\eqref{d1.4} if and only if \eqref{d1.5} holds.

Before embarking on our study of the downsloping bottom problem,
we prove a $H^2$-stability result in the upsloping bottom case
assuming only the homogeneous Dirichlet boundary condition
\eqref{d1.3}, thus complementing the $H^1$-estimate of
\cite{DSZ2009}. The proof requires that
$\frac{1}{\alpha}\displaystyle{\max_{0\leq r\leq R}}s(r)$ be
sufficiently small, i.e. a `small frequency-shallow water'
assumption. In what follows, $c$ or $C$ will denote generic
positive constants, not necessarily having the same values in any
two places.
%
%
%%%%%%%%%%%%%%%%%%%%%%%%%%%%%%%%%%%%%
%
\begin{proposition}\label{pd2.1}
Suppose that $\dot{s}(r)\leq 0$ for $r\in[0,R]$, $q\neq 0$, and
$\frac{1}{\alpha}\displaystyle{\max_{0\leq r\leq R}}s(r)$ is
sufficiently small. Then, there exists a constant $c$ such that
any solution $v$ of the ibvp {\rm (WA)}, \eqref{d1.1}-\eqref{d1.3}
satisfies
\begin{equation}\label{d2.5}
\|v(r,\cdot)\|_2\leq c\|v_0\|_2,\quad 0\leq r\leq R.
\end{equation}
\end{proposition}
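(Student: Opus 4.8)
The plan is to run an energy argument on the full norm $\|v(r,\cdot)\|_2^2=\|v\|^2+\|v_z\|^2+\|v_{zz}\|^2$ and to close it with Gronwall's inequality. First I would add the three identities \eqref{eq11}, \eqref{eq22} and \eqref{eq33} of Lemma~\ref{ll1} to obtain an expression for $\tfrac{d}{dr}\|v(r,\cdot)\|_2^2$. The homogeneous Dirichlet condition \eqref{d1.3} gives $v(r,s(r))=0$, which annihilates the last boundary term of \eqref{eq22} and, in \eqref{eq11}, forces ${\mathcal F}(v;r,s(r))=q\,v_r(r,s(r))$ by \eqref{d1.6}; differentiating \eqref{d1.3} in $r$ yields $v_r(r,s(r))=-\dot s(r)\,v_z(r,s(r))$, so that ${\mathcal F}(v;r,s(r))=-q\,\dot s(r)\,v_z(r,s(r))$. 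Since $\dot s(r)\le 0$, the contributions $\dot s\,|v_z(r,s(r))|^2$ and $\dot s\,|v_{zz}(r,s(r))|^2$ coming from \eqref{eq22} and \eqref{eq33} are non-positive and may simply be discarded.

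After this reduction the right-hand side still contains two kinds of dangerous terms: interior terms involving the range derivative $v_r$ (namely $\|v_r\|^2$ in \eqref{eq11} and the cross terms $\int \tfrac{1+q\beta}{q}\,v_r\,\bar v\,dz$ and $\int \tfrac1q\,v_r\,\overline{v_{zz}}\,dz$ in \eqref{eq22} and \eqref{eq33}), and the single surviving boundary term of \eqref{eq11}, which through the relations above becomes an expression bilinear in $v_z(r,s(r))$ and $v_{rz}(r,s(r))$. The crux is to control all of these by $\|v(r,\cdot)\|_2$. For this I would read (WA) as an elliptic two-point boundary value problem for $v_r$ in the depth variable, $\alpha^2 q\,v_{rzz}+(1+q\beta)\,v_r={\rm i}\,\lambda\,(\alpha^2 v_{zz}+\beta v)$, with $v_r(r,0)=0$ and the inhomogeneous datum $v_r(r,s(r))=-\dot s\,v_z(r,s(r))$. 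Under the small-frequency--shallow-water hypothesis the associated operator (the untransformed counterpart of $\Lambda(r)$, whose invertibility and bounded inverse are the subject of the next section) is coercive uniformly in $r$, so elliptic regularity gives $\|v_r\|+\|v_{rz}\|\le c\,(\|v\|_2+|v_z(r,s(r))|)$ and, after lifting the boundary datum, $|v_{rz}(r,s(r))|\le c\,\|v\|_2$; the trace $|v_z(r,s(r))|$ is itself $\le c\,\|v\|_2$ by the trace inequality and the embedding $H^2\hookrightarrow C^1$ on $[0,s(r)]$.

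With these bounds I would estimate each dangerous term by Cauchy--Schwarz and Young's inequality: the cross terms are $\le c\,\|v_r\|\,\|v\|_2\le c\,\|v\|_2^2$, the term $\|v_r\|^2\le c\,\|v\|_2^2$, and the mixed boundary term is $\le c\,|v_{rz}(r,s(r))|\,|v_z(r,s(r))|\le c\,\|v\|_2^2$. The identity \eqref{eq44}, which is precisely the $L^2$ pairing of the elliptic equation for $v_r$ against $\overline{v_r}$, is the natural device for making the coupling between $\|v_r\|$, $\|v_{rz}\|$ and the boundary trace explicit, the coercivity that closes it being supplied by the smallness of $\tfrac1\alpha\max_{[0,R]} s$. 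Collecting everything yields a differential inequality $\tfrac{d}{dr}\|v(r,\cdot)\|_2^2\le C\,\|v(r,\cdot)\|_2^2$ on $[0,R]$, and Gronwall's lemma together with $v(0,\cdot)=v_0$ gives \eqref{d2.5}.

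I expect the \emph{main obstacle} to be the two boundary terms at the moving bottom $z=s(r)$ that involve the mixed derivative $v_{rz}(r,s(r))$ --- one produced by the reduction of \eqref{eq11} and one already present in \eqref{eq44}. Unlike in the flat-bottom case these do not vanish, and bounding $v_{rz}(r,s(r))$ requires full $H^2$ elliptic regularity for the range-derivative problem (so that $v_{rz}$ is continuous up to $z=s(r)$) together with the uniform-in-$r$ invertibility of the elliptic operator. This is exactly where the `small frequency-shallow water' assumption is used and where the argument departs from the horizontal-bottom analysis.
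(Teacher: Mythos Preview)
Your proposal is correct and follows essentially the same strategy as the paper: establish the auxiliary bound $\|v_r(r,\cdot)\|\le c\,\|v(r,\cdot)\|_2$ (and the corresponding trace control of $v_{rz}(r,s(r))$) via the elliptic structure of (WA) in the depth variable together with the smallness of $\tfrac{1}{\alpha}\max s$, then feed this into the energy identities and close with Gronwall.

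Two minor differences in execution are worth noting. First, the paper does \emph{not} include \eqref{eq11} in the Gronwall step; it adds only \eqref{eq22} and \eqref{eq33} and recovers the $L^2$ part of $\|\cdot\|_2$ a posteriori from Poincar\'e--Friedrichs. This is slightly more economical since it avoids the mixed boundary term you identify in \eqref{eq11}. Second, where you invoke abstract $H^2$ elliptic regularity for the $v_r$-problem to bound $|v_{rz}(r,s(r))|$, the paper proceeds more concretely: it uses \eqref{eq44} (exactly as you say, the $L^2$ pairing against $\overline{v_r}$), the Poincar\'e-type inequality $\|v_r\|\le s(r)\,\|v_{rz}\|$, and then reads off $\|v_{rzz}\|$ directly from (WA) solved for $v_{rzz}$, so that Sobolev's inequality gives $|v_{rz}(r,s(r))|\le c\,(\|v_{rz}\|+\|v\|_2)$; the smallness hypothesis then absorbs $\|v_{rz}\|^2$ on the left. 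Your elliptic-regularity phrasing and the paper's hands-on computation are two sides of the same coin.
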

%
%
%%%%%%%%%%%%%%%%%%%%%%%%%%%%%%%%%%%%%
%
\begin{proof}
We first prove that for $r\in[0,R]$ we have
\begin{equation}\label{d2.6}
\|v_r(r,\cdot)\|+\|{\mathcal F}(v;r,\cdot)\|\leq\,c\,\|v(r,\cdot)\|_2.
\end{equation}
To see this, note that \eqref{eq44} gives for $0\leq r\leq R$
\begin{equation}\label{d2.7}
\begin{split}
\|v_{rz}(r,\cdot)\|^2\leq\,\tfrac{c}{\alpha^2}\,\|v_r(r,\cdot)\|^2
+c\,\|v_r(r,\cdot)\|\,\|v(r,\cdot)\|_2+|v_{rz}(r,s(r))|\,|v_r(r,s(r))|.
\end{split}
\end{equation}
Since $v_r(r,0)=0$, we have
\begin{equation}\label{d2.8}
\|v_r(r,\cdot)\|\leq\,s(r)\,\|v_{rz}(r,\cdot)\|,\quad 0\leq r\leq R.
\end{equation}
By differentiating with respect to $r$ the Dirichlet boundary condition
$v(r,s(r))=0$, we obtain by the trace inequality
\begin{equation*}
|v_r(r,s(r))|=|\dot{s}|\;|v_z(r,s(r))|\leq\,c\,\|v(r,\cdot)\|_2.
\end{equation*}
Therefore, \eqref{d2.7} gives for $0\leq r\leq R$
\begin{equation}\label{d2.9}
\|v_{rz}(r,\cdot)\|^2\leq\,\tfrac{c}{\alpha^2}\,\|v_r(r,\cdot)\|^2
+c\,\|v_{rz}(r,\cdot)\|\,\|v(r,\cdot)\|_2
+c\,|v_{rz}(r,s(r))|\,\|v(r,\cdot)\|_2.
\end{equation}
The equation (WA) solved for $v_{rzz}$ and \eqref{d2.8} yields now
\begin{equation*}
\begin{split}
\|v_{rzz}(r,\cdot)\|\leq&\,c\,\left(\,\|v_r(r,\cdot)\|+\|v(r,\cdot)\|_2\,\right)\\
%%%
\leq&\,c\,\left(\,\|v_{rz}(r,\cdot)\|+\|v(r,\cdot)\|_2\,\right).\\
\end{split}
\end{equation*}
Thus by Sobolev's inequality we obtain
\begin{equation*}
\begin{split}
|v_{rz}(r,s(r))|\leq&\,c\,\left(\,
\|v_{rz}(r,\cdot)\|+\|v_{rzz}(r,\cdot)\|\,\right)\\
%%%
\leq&\,c\,\left(\,\|v_{rz}(r,\cdot)\|+\|v(r,\cdot)\|_2\,\right).
\end{split}
\end{equation*}
Using this in \eqref{d2.9} and
using \eqref{d2.8} we obtain, for any $\varepsilon>0$ and $0\leq
r\leq R$
\begin{equation*}
\|v_{rz}(r,\cdot)\|^2\leq\,\tfrac{c}{\alpha^2}\,s^2(r)\,
\|v_{rz}(r,\cdot)\|^2+\varepsilon\,\|v_{rz}(r,\cdot)\|^2
+c_{\varepsilon}\,\|v(r,\cdot)\|_2^2.
\end{equation*}
Hence, if $\frac{1}{\alpha}\displaystyle{\max_{0\leq r\leq
R}}s(r)$ is sufficiently small, we see, using again \eqref{d2.8},
that $\|v_r(r,\cdot)\|\leq\,c\,\|v(r,\cdot)\|_2$ for $0\leq r\leq R$.
Hence, \eqref{d2.6} follows, since
\begin{equation*}
\begin{split}
\|{\mathcal F}(v;r,\cdot)\|\leq&\,c\,\left(\,\|v_r(r,\cdot)\|
+\|v(r,\cdot)\|\,\right)\\
%%%
\leq&\,c\,\|v(r,\cdot)\|_2.\\
\end{split}
\end{equation*}
Adding now \eqref{eq22} and \eqref{eq33} and using the fact that
$\dot{s}(r)\leq 0$, estimates from the proof of \eqref{d2.6}, and
the Poincar$\acute{e}$-Friedrichs inequality we obtain for $0\leq
r\leq R$ that
\begin{equation*}
\begin{split}
\frac{d}{dr}\left(\,\|v_z(r,\cdot)\|^2+\|v_{zz}(r,\cdot)\|^2\,\right)
\leq&\,c\,\left(\,\|v_r(r,\cdot)\|^2+\|v_z(r,\cdot)\|^2+\|v_{zz}(r,\cdot)\|^2
+\|{\mathcal F}(v;r,\cdot)\|^2+\|v(r,\cdot)\|^2\,\right)\\
%%%
\leq&\,c\,\left(\,\|v_z(r,\cdot)\|^2+\|v_{zz}(r,\cdot)\|^2\,\right).\\
\end{split}
\end{equation*}
Hence, from Gr{\"o}nwall's inequality we obtain
\begin{equation*}
\|v_z(r,\cdot)\|^2+\|v_{zz}(r,\cdot)\|^2\leq c{\Big
(}\|v_z(0,\cdot)\|^2+\|v_{zz}(0,\cdot)\|^2{\Big )},
\end{equation*}
from which our conclusion follows. \end{proof}
\par
The $H^2$-stability estimate \eqref{d2.5} implies of course the
uniqueness of solution of the ibvp (WA),
\eqref{d1.1}-\eqref{d1.3} under the hypotheses of Proposition
\ref{pd2.1}, and forms the basis for a well-posedness theory for
this problem.
\par
We turn now to the downsloping bottom case, with which we shall
be concerned for the sequel of this paper. We first establish the
following basic \textit{a priori} estimates.
\begin{thm}\label{td.2.1}
Supose that $\dot{s}(r)>0$ for $r\in[0,R]$,
$q\neq 0$, and that either ${\rm Im}(q^{-1}+\beta(r,z))$ is of one
sign in $\mathcal{D}$ or that
$\frac{1}{\alpha}\displaystyle{\max_{0\leq r\leq R}}s(r)$ is
sufficiently small. Then the ibvp (WA),
\eqref{d1.1}-\eqref{d1.3}, \eqref{d1.7} is $L^2$-, $H^1$-, and
$H^2$-stable.
\end{thm}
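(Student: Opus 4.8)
The plan is to run a combined energy argument on $\|v(r,\cdot)\|_2^2=\|v\|^2+\|v_z\|^2+\|v_{zz}\|^2$, built from the four identities of Lemma \ref{ll1}, and to close it by Grönwall's inequality. The first thing I would do is record the consequences of the boundary data. From \eqref{d1.2} and \eqref{d1.3} we have $v(r,0)=v_r(r,0)=0$ and $v(r,s(r))=0$; the extra condition \eqref{d1.7} gives $v_r(r,s(r))=0$ and, since $\dot{s}(r)\neq0$, also $v_z(r,s(r))=0$, whence $\mathcal{F}(v;r,s(r))=0$. Feeding these into Lemma \ref{ll1} kills the first and last terms of \eqref{eq11}, both boundary terms of \eqref{eq22}, and the boundary term of \eqref{eq44}; the only boundary term surviving in the whole system is $\dot{s}(r)\,|v_{zz}(r,s(r))|^2$ in \eqref{eq33}, and it is \emph{positive} because the bottom is downsloping. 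This is exactly what distinguishes the present case from the upsloping Proposition \ref{pd2.1} (where it had a favorable sign and was dropped), and controlling it is the main obstacle.

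The central intermediate estimate, as in Proposition \ref{pd2.1}, is $\|v_r(r,\cdot)\|+\|v_{rz}(r,\cdot)\|+\|\mathcal{F}(v;r,\cdot)\|\le c\,\|v(r,\cdot)\|_2$, and it is here that the two alternative hypotheses enter, each treating the now boundary-term-free \eqref{eq44} differently. Taking moduli in \eqref{eq44} gives $\|v_{rz}\|^2\le \tfrac{c}{\alpha^2}\|v_r\|^2+c\,\|v_r\|\,\|v\|_2$; combined with $\|v_r\|\le s(r)\|v_{rz}\|$ (valid since $v_r(r,0)=0$) and the smallness of $\tfrac{1}{\alpha}\max_{0\le r\le R}s(r)$, the term $\tfrac{c}{\alpha^2}s^2\|v_{rz}\|^2$ is absorbed and $\|v_{rz}\|\le c\|v\|_2$ follows exactly as there. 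In the other case one divides \eqref{eq44} by $q$ and takes imaginary parts; the real quantity $\alpha^2\|v_{rz}\|^2$ drops out, leaving $\int_0^{s(r)}\text{Im}(q^{-1}+\beta)\,|v_r|^2\,dz=\text{Im}\bigl[\tfrac{\cunit\,\alpha^2\lambda}{q}\int_0^{s(r)}(\tfrac{\beta}{\alpha^2}v+v_{zz})\,\overline{v_r}\,dz\bigr]$, whose right-hand side is $\le c\,\|v\|_2\,\|v_r\|$; when $\text{Im}(q^{-1}+\beta)$ keeps one sign (hence stays bounded away from $0$ on the compact $\mathcal{D}$) the left-hand side dominates a fixed positive multiple of $\int_0^{s(r)}|v_r|^2\,dz$, giving $\|v_r\|\le c\|v\|_2$ directly. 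Either way, once one of $\|v_r\|,\|v_{rz}\|$ is bounded by $c\|v\|_2$ the other follows from the modulus form of \eqref{eq44}, and $\|\mathcal{F}(v;r,\cdot)\|\le c(\|v_r\|+\|v\|)\le c\|v\|_2$.

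With the intermediate bound in hand the three differential inequalities follow. The simplified \eqref{eq11} gives $\tfrac{d}{dr}\|v\|^2\le c(\|v_r\|^2+\|\mathcal{F}(v)\|^2)\le c\|v\|_2^2$ (and is even $\le 0$, i.e.\ conservation/decay, in the favorable-sign situation), and the simplified \eqref{eq22} gives $\tfrac{d}{dr}\|v_z\|^2\le c(\|v_r\|\,\|v\|+\|v_z\|^2+\|v\|^2)\le c\|v\|_2^2$. For \eqref{eq33} the volume terms are again $\le c\|v\|_2^2$; the hard part is $\dot{s}\,|v_{zz}(r,s(r))|^2$. I would handle it by differentiating the boundary relation $v_z(r,s(r))=0$ along the bottom to get $v_{zz}(r,s(r))=-\dot{s}^{-1}\,v_{rz}(r,s(r))$, so that $\dot{s}\,|v_{zz}(r,s(r))|^2=\dot{s}^{-1}|v_{rz}(r,s(r))|^2$; since $\dot{s}$ is bounded away from $0$ on $[0,R]$, a one-dimensional Sobolev (trace) inequality yields $|v_{rz}(r,s(r))|\le c(\|v_{rz}\|+\|v_{rzz}\|)$, solving (WA) for $v_{rzz}$ gives $\|v_{rzz}\|\le c(\|v_r\|+\|v\|_2)$, and the intermediate estimate bounds everything by $c\|v\|_2$. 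Hence $\dot{s}\,|v_{zz}(r,s(r))|^2\le c\|v\|_2^2$ and $\tfrac{d}{dr}\|v_{zz}\|^2\le c\|v\|_2^2$ as well.

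Adding the three inequalities yields $\tfrac{d}{dr}\|v(r,\cdot)\|_2^2\le c\,\|v(r,\cdot)\|_2^2$, and Grönwall gives $\|v(r,\cdot)\|_2\le c\,\|v_0\|_2$ on $[0,R]$, from which the $L^2$-, $H^1$-, and $H^2$-stability all follow. I expect the crux throughout to be the downsloping boundary term $\dot{s}\,|v_{zz}(r,s(r))|^2$: it cannot be discarded as in the upsloping case but must instead be absorbed through the chain trace $\to$ (WA) $\to$ intermediate bound, and that chain is precisely what forces either the one-sign condition on $\text{Im}(q^{-1}+\beta)$ or the shallow-water smallness of $\tfrac{1}{\alpha}\max_{0\le r\le R}s(r)$.
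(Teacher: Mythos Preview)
Your argument is correct and closes cleanly, but the route to the intermediate bound $\|v_r\|+\|v_{rz}\|\le c\|v\|_2$ is genuinely different from the paper's. The paper does not work with \eqref{eq44} here at all; instead it observes that $f:=\mathcal{F}(v;r,\cdot)$ solves, for each fixed $r$, the two-point boundary value problem $\bigl(q^{-1}+\beta\bigr)f+\alpha^2 f_{zz}=-\tfrac{i\lambda}{q}\,v$, $f(r,0)=f(r,s(r))=0$, and the two alternative hypotheses are precisely what make the associated operator $\mathcal{L}(r)u:=(q^{-1}+\beta)u+\alpha^2 u_{zz}$ injective on $H^2\cap H^1_0$; elliptic regularity and the Fredholm alternative then give $\|f(r,\cdot)\|_2\le C\|v(r,\cdot)\|$, hence $\|v_r\|_\ell\le C\|v\|_\ell$ for $\ell=0,1,2$. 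This is sharper than your estimate (right-hand side $\|v\|$ rather than $\|v\|_2$) and lets the paper close the $L^2$, $H^1$, and $H^2$ Gr{\"o}nwall loops \emph{separately}, yielding in particular genuine $L^2$-stability $\|v(r,\cdot)\|\le c\|v_0\|$; your combined Gr{\"o}nwall on $\|v\|_2^2$ only delivers $\|v\|_2\le c\|v_0\|_2$, from which the lower-norm bounds inherit the $H^2$ constant. On the other hand your manipulation of \eqref{eq44} is entirely elementary and avoids invoking elliptic theory. The treatment of the dangerous boundary term $\dot{s}\,|v_{zz}(r,s(r))|^2$ is essentially the same in both proofs: differentiate $v_z(r,s(r))=0$ along the bottom and bound $|v_{rz}(r,s(r))|$ by a trace inequality, the paper via $\|f\|_2$, you via $\|v_{rz}\|+\|v_{rzz}\|$ together with (WA).
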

\begin{proof}
Let $v$ be the solution of the ibvp (WA),
\eqref{d1.1}-\eqref{d1.3}, \eqref{d1.7}. Then it follows from
\eqref{d1.6} that $f(r,z):={\mathcal F}(v;r,z)$ satisfies
\begin{equation}\label{d26}
\begin{split}
&\left(\tfrac{1}{q}+\beta\right)\,f+\alpha^2\,f_{zz}
=-\cunit\,\tfrac{\lambda}{q}\,v,\quad (r,z)\in\mathcal{D},\\
&f(r,0)=f(r,s(r))=0,\quad 0\leq r\leq R.
\end{split}
\end{equation}
For each $r\in [0,R]$ consider the operator $\mathcal{L}(r):H^2\cap
H_0^1\rightarrow L^2$ defined by
\begin{equation*}
\mathcal{L}(r)u:=\left(\tfrac{1}{q}+\beta(r,\cdot)\right)\,u
+\alpha^2\,\partial_z^2u,\quad\forall \,u\in H^2\cap H_0^1.
\end{equation*}
Then, under our hypotheses,
$\mathcal{L}(r)$ is invertible in $H^2\cap H_0^1$, in the sense that
$\mathcal{L}(r)u=0$ implies $u=0$. To see this, note that from
$\mathcal{L}u=0$ for $u\in H^2\cap H_0^1$, it follows that
$\int_0^{s(r)}\mathcal{L}u\, \overline{u}dz=0$, and by
integration by parts, that
\begin{equation}\label{d27}
\int_0^{s(r)}\Big{[}(q^{-1}+\beta)|u|^2-\alpha^2|u_z|^2\Big{]}dz=0.
\end{equation}

Taking real parts in \eqref{d27} and using the fact that
$\|u\|\leq s(r)\|u_z\|$, it follows that
\begin{equation*}
\|u_z(r,\cdot)\|^2\,\left(1-\tfrac{1}{\alpha^2}\,\,\max_{0\leq r\leq R}s^2(r)
\,\,\max_{z\in\mathcal{D}}|{\rm Re}(q^{-1}+\beta)|\right)\leq 0.
\end{equation*}
Hence, if $\frac{1}{\alpha}\displaystyle{\max_{0\leq r\leq
R}}s(r)$ is sufficiently small we have that $u=0$. Alternatively,
taking imaginary parts in \eqref{d27} we obtain that
$\int_0^{s(r)}{\rm Im}(q^{-1}+\beta)|u|^2dz=0$, from which if
${\rm Im}(q^{-1}+\beta)$ is nonzero and of one sign in
$\mathcal{D}$, then $u=0$.

We conclude, from standard elliptic p.d.e. theory and the Fredholm
alternative, \cite{Evans}, that given $w\in L^2$, then
$\mathcal{L}^{-1}w\in H^2\cap H_0^1$ and
$\|\mathcal{L}^{-1}w\|_2\leq C\|w\|$, for some positive $C=C(r)$,
independent of $w$. Since the coefficients of $\mathcal{L}$ are
smooth, $C$ may be taken as a continuous function on $[0,R]$.
Applying this result to the bvp \eqref{d26} we see that for
$0\leq r\leq R$,
\begin{equation}\label{d28}
\|f(r,\cdot)\|_2\leq\,C\,\|v(r,\cdot)\|.
\end{equation}
By the definition of $f$, it follows that on $[0,R]$
\begin{equation}\label{d29}
\|v_r(r,\cdot)\|_{\ell}\leq\,C\,\|v(r,\cdot)\|_{\ell},\quad\ell=0,1,2.
\end{equation}
Hence, by \eqref{eq11}, $L^2$-stability of the ibvp under
consideration follows. To get the $H^1$-estimate, we use
\eqref{eq22}, \eqref{d29} and the fact that $v_z(z,s(r))=0$ for
$0\leq r\leq R$, as remarked in the Introduction. It follows that
for $0\leq r\leq R$
\begin{equation*}
\tfrac{d}{dr}\|v_z(r,\cdot)\|^2\leq\,C\,\left(\|v(r,\cdot)\|^2
+\|v_z(r,\cdot)\|^2\right),
\end{equation*}
from which $H^1$-stability follows by the
Poincar$\acute{e}$-Friedrichs inequality. Finally, to get
$H^2$-stability, note that \eqref{eq33}, \eqref{d28}, \eqref{d29}
yield for $0\leq r\leq R$
\begin{equation}\label{d30}
\tfrac{d}{dr}\|v_{zz}\|^2\leq
\dot{s}(r)|v_{zz}(r,s(r))|^2+C\,\left(\|v(r,\cdot)\|^2
+\|v_{zz}(r,\cdot)\|^2\right).
\end{equation}
From $v_z(r,s(r))=0$ for $0\leq r\leq R$, it follows that
$v_{zr}(r,s(r))+\dot{s}(r)v_{zz}(r,s(r))=0$. Since,
$v_{zr}(r,s(r))=\frac{1}{q}\,f(r,s(r))$, it follows by the trace
inequality and \eqref{d28} that for $0\leq r\leq R$
\begin{equation*}
\begin{split}
\dot{s}(r)\,|v_{zz}(r,s(r))|^2=&\,\tfrac{1}{|\dot{s}(r)|}\,|v_{zr}(r,s(r))|^2\\
%%%
\leq&\,C\,\|f(r,\cdot)\|_2^2\\
%%%
\leq&\,C\,\|v(r,\cdot)\|^2.
\end{split}
\end{equation*}
From \eqref{d30} the $H^2$-stability estimate follows now.
\end{proof}
\begin{rem} If ${\rm Im}(\beta)\geq 0$ and ${\rm Im}(q)<0$ or if ${\rm
Im}(\beta)>0$ and $q$ is a real, nonzero constant, then ${\rm
Im}\left(\frac{1}{q}+\beta\right)>0$ follows. The ibvp {\rm (WA)},
\eqref{d1.1}-\eqref{d1.3}, \eqref{d1.7} is of course
$L^2$-conservative if $q$ and $\beta$ are real.
\end{rem}
\section{Invertibility conditions for $\Lambda$}
%
%
%
%
%
%We consider the ibvp (WA), \eqref{d1.1}-\eqref{d1.3},
%\eqref{d1.7} in the downsloping case $\dot{s}(r)>0$, $r\in[0,R]$,
%and recall once more that \eqref{d1.3} and \eqref{d1.7} are
%equivalent to
%
%
%\begin{equation}\label{d3.1}
%v(r,s(r))=0\;\;\mbox{and}\;\;v_z(r,s(r))=0\;\;\mbox{for}\;\;r\in[0,R].
%\end{equation}
%
%
%\par
%In what follows we shall find convenient to work with a problem
%posed on a horizontal bottom. To this end, we let
%$\Omega:=[0,R]\times[0,1]$, $I:=[0,1]$ and perform the change of
%variables $y=z/s(r)$ putting
%\begin{equation*}
%u(r,y):=v(r,y\,s(r))\quad\forall\,(r,y)\in\Omega.
%\end{equation*}
%
%Then, the ibvp consisting of (WA), \eqref{d1.1}, \eqref{d1.2},
%\eqref{d3.1} for $q\neq 0$ takes the following form
%
%\begin{equation}\label{NewProblem2}
%\begin{split}
%&\Big[\,-\tfrac{(1+q\,\gamma)\,s^2(r)}{\alpha^2\,q}\,\partial_y^0
%-\partial_y^2\,\Big]\big(u_r-\cunit\,\tfrac{\lambda}{q}\,u-\tfrac{{\dot
%s}(r)}{s(r)}\,y\,u_y\big)=\cunit\,\tfrac{\lambda\,s^2(r)}{\alpha^2\,q^2}\,u
%\quad\forall\,(r,y)\in\Omega,\\
%%%%
%&u(r,0)=0\quad\forall\,r\in[0,R],\\
%%%%
%&u(r,1)=u_y(r,1)=0\quad\forall\,r\in[0,R],\\
%%%
%&u(0,y)=u_0(y):=w_0(y\,s(0))\quad\forall\,y\in I,\\
%\end{split}
%\end{equation}
%
%where $\gamma(r,y):=b(r,y s(r))$ for $(r,y)\in\Omega$.
%
%For $r\in[0,R]$ define the differential operator
%$\Lambda(r):H^2(I)\cap H^1_0(I)\rightarrow L^2(I)$ by
%
%\begin{equation*}
%\Lambda(r)v:=-v''-\tfrac{(1+q\,\gamma)\,s^2(r)}{\alpha^2\,q}\,v,
%\quad\forall\,v\in H^2(I)\cap H^1_0(I).
%\end{equation*}
%
Assuming that the operator $\Lambda(r)$ defined by \eqref{def_Lambda} is invertible for all
$r\in[0,R]$ and that $u_0\in H_0^1(I)$, we may write the problem
\eqref{NewProblem2} equivalently as:
\begin{equation}\label{NewProblem3}
\begin{split}
&u_r-\tfrac{{\dot s}(r)}{s(r)}\,y\,u_y=
\cunit\,\tfrac{\lambda\,s^2(r)}{\alpha^2\,q^2}\,\,T(r)u
+\cunit\,\tfrac{\lambda}{q}\,u
\quad\forall\,(r,y)\in\Omega,\\
%%%%
&u(r,1)=0\quad\forall\,r\in[0,R],\\
%%%%
&u(0,y)=u_0(y)\quad\forall\,y\in I,\\
\end{split}
\end{equation}
where $T(r):=\Lambda^{\ssy -1}(r)$ for $r\in[0,R]$. To simplify
the notation we set
\begin{equation*}
\delta(r):=\tfrac{{\dot s}(r)}{s(r)}, \quad
\zeta(r,y):=\tfrac{(1+q\,\gamma(r,y))\,s^2(r)}{\alpha^2\,q},
\quad\xi(r):=\tfrac{\lambda\,s^2(r)}{\alpha^2\,q^2}
\end{equation*}
for $r\in[0,R]$ and $y\in I$.
%
%%%%%%%%%%%%%%%%%%%%%%%%%%%%%%%%%%%%%%%%%%%%%%%%%%%%%%%%%%%%%%%%%%%%%
%Remark%%%%%%%%%%%%%%%%%%%%%%%%%%%%%%%%%%%%%%%%%%%%%%%%%%%%%%%%%%%%%%
%%%%%%%%%%%%%%%%%%%%%%%%%%%%%%%%%%%%%%%%%%%%%%%%%%%%%%%%%%%%%%%%%%%%%
%
\begin{remark}
Let $g(r,y):=u_r(r,y)-\delta(r)\,y\,u_y(r,y)$. Assuming that the
solution of \eqref{NewProblem3} is smooth on $\Omega$, we obtain
the compatibility conditions $g(r,1)=g(r,0)=0$, which yield that
$u_r(r,0)=0$ and $u_y(r,1)=0$ for $r\in[0,R]$. Finally, since
$u(0,0)=u_0(0)=0$, we get the surface pressure release condition
$u(r,0)=0$ for $r\in[0,R]$.
\end{remark}
\begin{remark}\label{rd3.2}
Whereas the ibvp (WA), \eqref{d1.1}-\eqref{d1.3}, \eqref{d1.7} in
the $r$, $z$ variables is $L^2$-conservative, in the sense that
its solution satisfies \eqref{d1.4} for $q$, $\beta$ real, the
solution of the transformed ibvp \eqref{NewProblem2} in the $r$,
$y$ variables conserves the quantity $s(r)\|u(r,\cdot)\|_{\ssy
0,I}^2$ for $q$, $\gamma$ real.
\end{remark}

In the rest of this section we present some conditions on the data
that ensure invertibility of the operator $\Lambda$. The
conditions are similar in nature to those in the hypothesis of
Theorem \ref{td.2.1}, but it is useful to present them here
because they are expressed in terms of the $y$ variable and
motivate analogous sufficient conditions for the invertibility of
the discrete operators in the next section. We use the notation
$\|v\|_{\ssy j,I},\;j\geq 0$, for the norm in the Sobolev space
$H^j(I)$ and put $|v|_{\ssy 1,I}=\|v_y\|_{\ssy 0,I}$.
%
%
%
%
%
%
%
%%%%%%%%%%%%%%%%%%%%%%%%%%%%%%%%%%%%%%%%%%%%%%%%%%%%%%%%%%%%%%%%%%%%%
%Lemma%%%%%%%%%%%%%%%%%%%%%%%%%%%%%%%%%%%%%%%%%%%%%%%%%%%%%%%%%%%%%%%
%%%%%%%%%%%%%%%%%%%%%%%%%%%%%%%%%%%%%%%%%%%%%%%%%%%%%%%%%%%%%%%%%%%%%
%
\begin{lemma}
Assume that
\begin{equation}\label{COE_2}
C_{\ssy E\!B}:=\inf_{\ssy\Omega}\left[\tfrac{1}{(C_{\ssy
P\!F})^2}-\tfrac{s^2}{\alpha^2\,|q|^2}\,\left(\text{\rm
Re}(q)+|q|^2\,\text{\rm Re}(\gamma)\right)\right]>0,
\end{equation}
where $C_{\ssy P\!F}>0$ is the constant of the
Poincar{\'e}-Friedrichs inequality on $I$, i.e., $\|v\|_{\ssy
0,I}\leq\,C_{\ssy P\!F}\,|v|_{\ssy 1,I}$ for $v\in H_0^1(I)$, or
that there exists $\delta_{\star}\in\{1,-1\}$ such that
\begin{equation}\label{CEE_1}
C_{\ssy B\!B}:=\inf_{\ssy\Omega}\left[\,
\tfrac{\delta_{\star}\,s^2}{\alpha^2\,|q|^2}\,\left(\text{\rm
Im}(q)-|q|^2\,\text{\rm Im}(\gamma)\right)\,\right]>0.
\end{equation}
Then, there exists a constant $C>0$ such that
\begin{equation}\label{COE_3}
\max_{r\in[0,R]}\|T(r)\psi\|_{\ssy 1,I}\leq\,C\,\|\psi\|_{\ssy
0,I},\quad\forall\,\psi\in L^2(I).
\end{equation}
\end{lemma}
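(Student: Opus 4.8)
The plan is to read off the bound \eqref{COE_3} from an energy identity for the resolvent equation $\Lambda(r)u=\psi$, and then recover the invertibility of $\Lambda(r)$ from a Fredholm argument exactly as in the proof of Theorem~\ref{td.2.1}. Fix $r\in[0,R]$, let $\psi\in L^2(I)$, and suppose $u=T(r)\psi\in H^2(I)\cap H^1_0(I)$ solves $-u''-\zeta(r,\cdot)\,u=\psi$ with $u(0)=u(1)=0$, where $\zeta(r,y)=\tfrac{(1+q\,\gamma(r,y))\,s^2(r)}{\alpha^2\,q}=\tfrac{s^2(r)}{\alpha^2}\bigl(\tfrac1q+\gamma(r,y)\bigr)$. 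Multiplying by $\overline u$, integrating over $I$, and integrating the second–derivative term by parts (the boundary terms vanish since $u\in H^1_0(I)$) yields the fundamental identity
\[
|u|_{\ssy 1,I}^2-\int_0^1\zeta(r,y)\,|u|^2\,dy=\int_0^1\psi\,\overline u\,dy .
\]
A direct computation gives $\text{\rm Re}(\zeta)=\tfrac{s^2}{\alpha^2\,|q|^2}\bigl(\text{\rm Re}(q)+|q|^2\text{\rm Re}(\gamma)\bigr)$ and $\text{\rm Im}(\zeta)=-\tfrac{s^2}{\alpha^2\,|q|^2}\bigl(\text{\rm Im}(q)-|q|^2\text{\rm Im}(\gamma)\bigr)$, so the two hypotheses say precisely that $\tfrac1{(C_{\ssy P\!F})^2}-\text{\rm Re}(\zeta)\ge C_{\ssy E\!B}$ on $\Omega$ under \eqref{COE_2}, respectively $-\delta_\star\,\text{\rm Im}(\zeta)\ge C_{\ssy B\!B}$ on $\Omega$ under \eqref{CEE_1}.

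Under \eqref{COE_2} I would take real parts in the identity. Using $\text{\rm Re}(\zeta)\le\tfrac1{(C_{\ssy P\!F})^2}-C_{\ssy E\!B}$ pointwise, the left-hand side is bounded below by $|u|_{\ssy 1,I}^2-\bigl(\tfrac1{(C_{\ssy P\!F})^2}-C_{\ssy E\!B}\bigr)\|u\|_{\ssy 0,I}^2$. Splitting $|u|_{\ssy 1,I}^2=\theta\,|u|_{\ssy 1,I}^2+(1-\theta)|u|_{\ssy 1,I}^2$ and applying the Poincar\'e--Friedrichs inequality $\tfrac1{(C_{\ssy P\!F})^2}\|u\|_{\ssy 0,I}^2\le|u|_{\ssy 1,I}^2$ only to the second summand, one can choose $\theta\in(0,1)$ with $\theta<C_{\ssy E\!B}(C_{\ssy P\!F})^2$ so that the coefficient of $\|u\|_{\ssy 0,I}^2$ stays positive; this produces a lower bound $c\,\|u\|_{\ssy 1,I}^2$ with $c>0$ depending only on $C_{\ssy E\!B}$ and $C_{\ssy P\!F}$. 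Since the right-hand side is $\le\|\psi\|_{\ssy 0,I}\|u\|_{\ssy 0,I}\le\|\psi\|_{\ssy 0,I}\|u\|_{\ssy 1,I}$, this gives $\|u\|_{\ssy 1,I}\le c^{-1}\|\psi\|_{\ssy 0,I}$.

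The case \eqref{CEE_1} is the delicate one, and I expect the derivative bound to be the main obstacle. Taking imaginary parts annihilates the real term $|u|_{\ssy 1,I}^2$, leaving $-\int_0^1\text{\rm Im}(\zeta)\,|u|^2\,dy=\text{\rm Im}\int_0^1\psi\,\overline u\,dy$; multiplying by $-\delta_\star$ and using $-\delta_\star\,\text{\rm Im}(\zeta)\ge C_{\ssy B\!B}$ yields $C_{\ssy B\!B}\|u\|_{\ssy 0,I}^2\le\|\psi\|_{\ssy 0,I}\|u\|_{\ssy 0,I}$, i.e. the $L^2$ bound $\|u\|_{\ssy 0,I}\le C_{\ssy B\!B}^{-1}\|\psi\|_{\ssy 0,I}$. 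Since this ``imaginary coercivity'' controls $u$ but not $u'$, I would then go back to the real part, writing $|u|_{\ssy 1,I}^2=\int_0^1\text{\rm Re}(\zeta)\,|u|^2\,dy+\text{\rm Re}\int_0^1\psi\,\overline u\,dy\le\|\text{\rm Re}(\zeta)\|_{L^\infty(\Omega)}\|u\|_{\ssy 0,I}^2+\|\psi\|_{\ssy 0,I}\|u\|_{\ssy 0,I}$, and insert the $L^2$ bound just obtained. Because $s$ and $\gamma$ are smooth and $\Omega$ is compact, $\|\text{\rm Re}(\zeta)\|_{L^\infty(\Omega)}$ is finite, and one concludes $|u|_{\ssy 1,I}\le C\|\psi\|_{\ssy 0,I}$; combined with the $L^2$ bound this gives $\|u\|_{\ssy 1,I}\le C\|\psi\|_{\ssy 0,I}$.

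Finally, in either case applying the estimate with $\psi=0$ shows that $\Lambda(r)u=0$ forces $u=0$, so $\Lambda(r)$ has trivial kernel; by the Fredholm alternative and elliptic regularity, exactly as in the proof of Theorem~\ref{td.2.1}, $\Lambda(r)$ is invertible and $T(r)=\Lambda^{-1}(r)$ is the bounded operator $L^2(I)\to H^2(I)\cap H^1_0(I)$ obeying the derived bound. Uniformity in $r$ is automatic, since $C_{\ssy E\!B}$ and $C_{\ssy B\!B}$ are infima over all of $\Omega$ and $\|\text{\rm Re}(\zeta)\|_{L^\infty(\Omega)}$ is a supremum over $\Omega$; hence the resulting constant $C$ is independent of $r$, yielding the $\max_{r\in[0,R]}$ in \eqref{COE_3}.
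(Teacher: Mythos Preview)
Your proof is correct and follows essentially the same energy-identity approach as the paper: both test $\Lambda(r)u=\psi$ against $\overline{u}$, take real/imaginary parts, and use Poincar\'e--Friedrichs under \eqref{COE_2} or the sign condition under \eqref{CEE_1} to get first the $L^2$ bound and then the $H^1$-seminorm bound from the real part of the identity. The only cosmetic differences are that under \eqref{COE_2} you use a $\theta$-splitting to obtain $H^1$-coercivity in one step (the paper does $L^2$ first, then bootstraps), and you make the Fredholm/invertibility step explicit whereas the paper simply sets $v=T(r)\psi$; note also a harmless sign slip --- you should multiply by $\delta_\star$, not $-\delta_\star$, to use $-\delta_\star\,\text{\rm Im}(\zeta)\ge C_{\ssy B\!B}$.
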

%
%%%%%%%%%%%%%%%%%%%%%%%%%%%%%%%%%%%%%%%%%%%%%%%%%%%%%%%%%%%%%%%%%%%%%
% end lemma %%%%%%%%%%%%%%%%%%%%%%%%%%%%%%%%%%%%%%%%%%%%%%%%%%%%%%%%%
%%%%%%%%%%%%%%%%%%%%%%%%%%%%%%%%%%%%%%%%%%%%%%%%%%%%%%%%%%%%%%%%%%%%%
%
%
%
%
%
%
%
%
%
%%%%%%%%%%%%%%%%%%%%%%%%%%%%%%%%%%%%%%%%%%%%%%%%%%%%%%%%%%%%%%%%%%%%%
% begin of the proof of the lemma %%%%%%%%%%%%%%%%%%%%%%%%%%%%%%%%%%%
%%%%%%%%%%%%%%%%%%%%%%%%%%%%%%%%%%%%%%%%%%%%%%%%%%%%%%%%%%%%%%%%%%%%%
%
\begin{proof}
Let $r\in[0,R]$, $v\in H^2(I)\cap H_0^1(I)$ and $\psi\in L^2(I)$.
First, observe that
\begin{equation}\label{Basic_Tool_1}
\text{\rm Re}(\Lambda(r)v,v)_{\ssy 0,I}=\|v'\|^2_{\ssy
0,I}-\int_{\ssy I}\tfrac{\text{\rm Re}({\overline q}
+|q|^2\,\gamma)\,s^2(r)}{\alpha^2\,|q|^2}\,|v|^2\;dy.
\end{equation}
\par
When \eqref{COE_2} holds, use of the Poincar{\'e}-Friedrichs
inequality and \eqref{Basic_Tool_1} gives
\begin{equation}\label{COE_5}
\begin{split}
\text{\rm Re}(\Lambda(r)v,v)_{\ssy 0,I}\ge\,&\int_{\ssy
I}\left[\tfrac{1}{(C_{\ssy P\!F})^2}-\tfrac{\text{\rm
Re}(q+|q|^2\,\gamma)\,s^2(r)}
{\alpha^2\,|q|^2}\right]\,|v|^2\;dy\\
%%%
\ge\,&C_{\ssy E\!B}\,\|v\|^2_{\ssy 0,I}.
\end{split}
\end{equation}
\par
When \eqref{CEE_1} holds, we have
\begin{equation}\label{COE_555}
\begin{split}
\big|\,\text{\rm Im}(\Lambda(r)v,v)_{\ssy 0,I}\,\big|
\ge&\,\delta_{\star}\,\text{\rm Im}(\Lambda(r)v,v)_{\ssy 0,I}\\
%%%
\ge&\,\delta_{\star}\,\int_{\ssy I}\tfrac{\text{\rm Im}(q
+|q|^2\,{\overline \gamma})\,s^2(r)}{\alpha^2\,|q|^2}\,|v|^2\;dx\\
%%%
\ge&\,C_{\ssy B\!B}\,\|v\|^2_{\ssy 0,I}.
\end{split}
\end{equation}
\par
Since, by definition, $T(r)\psi=\Lambda^{\ssy -1}(r)\psi\in
H^2(I)\cap H_0^1(I)$, we set $v=T(r)\psi$ in \eqref{COE_5} or
\eqref{COE_555} and apply the Cauchy-Schwarz inequality to
conclude for $r\in[0,R]$
\begin{equation}\label{COE_4}
\|T(r)\psi\|_{\ssy 0,I}\leq\,C\,\|\psi\|_{\ssy 0,I}.
\end{equation}
Then, we combine \eqref{Basic_Tool_1} for $v=T(r)\psi$, and
\eqref{COE_4} to get
\begin{equation}\label{COE_7}
\begin{split}
|T(r)\psi|_{\ssy 1,I}^2=&\,\text{\rm Re}(\psi,T(r)\psi)_{\ssy
0,I}+\int_{\ssy I}\tfrac{\text{\rm Re}({\overline q}
+|q|^2\,\gamma)\,s^2(r)}{\alpha^2\,|q|^2}\,|T(r)\psi|^2\;dy\\
%%%
\leq&\,\|\psi\|_{\ssy 0,I}\,\|T(r)\psi\|_{\ssy 0,I}
+C\,\|T(r)\psi\|^2_{\ssy 0,I}\\
%%%
\leq&\,C\,\|\psi\|_{\ssy 0,I}^2.\\
\end{split}
\end{equation}
Now \eqref{COE_3} follows easily from \eqref{COE_7} and
\eqref{COE_4}.
\end{proof}
%
%%%%%%%%%%%%%%%%%%%%%%%%%%%%%%%%%%%%%%%%%%%%%%%%%%%%%%%%%%%%%%%%%%%%%
% end of the proof of the lemma %%%%%%%%%%%%%%%%%%%%%%%%%%%%%%%%%%%%%
%%%%%%%%%%%%%%%%%%%%%%%%%%%%%%%%%%%%%%%%%%%%%%%%%%%%%%%%%%%%%%%%%%%%%
%
%
%
%
%
%
%
%
%
%
%
%%%%%%%%%%%%%%%%%%%%%%%%%%%%%%%%%%%%%%%%%%%%%%%%%%%%%%%%%%%%%%%%%%%%
% begin statement
%%%%%%%%%%%%%%%%%%%%%%%%%%%%%%%%%%%%%%%%%%%%%%%%%%%%%%%%%%%%%%%%%%%%
%
Assuming that \eqref{COE_2} or \eqref{CEE_1} holds and using a
induction argument, it is easy to establish that for
$m\in\nset_0$, there exists $C_m>0$ such that
\begin{equation}\label{Ringer_1}
\max_{r\in[0,R]}|T(r)f|_{m+2,\infty,{\ssy I}}
\leq\,C_m\,|f|_{m,\infty,{\ssy I}},\quad\forall\,f\in
C^{m}(I;\cset),
\end{equation}
where on $C^m$, $|f|_{m,\infty,{\ssy I}}=\displaystyle{\max_{y\in
I}}|f^{(m)}(y)|$. In addition, for $m\in\nset_0$ and
$\ell\in\nset$, there exists $C_{\ell,m}>0$ such that
\begin{equation}\label{Cagaroo_1}
\max_{r\in[0,R]}|\partial_r^{\ell}(T(r)f)|_{m+2,\infty,{\ssy I}}
\leq\,C_{\ell,m}\,|f|_{\max\{m-2,0\},\infty,{\ssy
I}},\quad\forall\,f\in C^{\max\{m-2,0\}}(I;\cset).
\end{equation}
%
%%%%%%%%%%%%%%%%%%%%%%%%%%%%%%%%%%%%%%%%%%%%%%%%%%%%%%%%%%%%%%%%%%%%
% end of statement
%%%%%%%%%%%%%%%%%%%%%%%%%%%%%%%%%%%%%%%%%%%%%%%%%%%%%%%%%%%%%%%%%%%%
%
%
%
\begin{remark}\label{rd3.3}
Differentiating both sides of the p.d.e. in \eqref{NewProblem3}
once with respect to $y$, taking $y=1$ and using the boundary
conditions we get
\begin{equation*}
u_{yy}(r,1)=-\cunit \,\tfrac{\lambda\,s^3(r)}{\alpha^2\,q^2\,{\dot
s}(r)}\,(T(r)u)_y(r,1), \quad\forall\,r\in[0,R].
\end{equation*}
This reminds us in some sense of the boundary condition proposed
in \cite{DSZ2009} for downsloping bottoms, which  has the form
\begin{equation*}
u_{yy}(r,1)=\cunit\,\tfrac{2}{\alpha}\,s(r)\,{\dot
s}(r)\,u_y(r,1),\quad\forall\,r\in[0,R].
\end{equation*}
\end{remark}
\begin{remark}\label{rd3.4}
Condition \eqref{COE_2} follows from the hypothesis that
$\frac{s}{\alpha}$ is sufficiently small for $0\leq r\leq R$. If ${\rm
Im}(\gamma)\geq 0$ in $\Omega$ and ${\rm Im}(q)<0$ or ${\rm
Im}(\gamma)>0$ in $\Omega$ and $q$ is real and nonzero, condition
\eqref{CEE_1} is valid for $\delta_*=-1$.
\end{remark}
%
%%%%%%%%%%%%%%%%%%%%%%%%%%%%%%%%%%%%%%%%%%%%%%%%%%%%%%%%%%%%%%%%%%%%
% end of the proof
%%%%%%%%%%%%%%%%%%%%%%%%%%%%%%%%%%%%%%%%%%%%%%%%%%%%%%%%%%%%%%%%%%%%
%
%
%
%%%%%%%%%%%%%%%%%%%%%%%%%%%%%%%%%%%%%%%%%%%%%%%%%%%%%%%%%%%%%%%%%%%%
%%%%%%%%%%%%%%%%%%%%%%%%%%%%%%%%%%%%%%%%%%%%%%%%%%%%%%%%%%%%%%%%%%%%
%%%%%%%%%%%%%%%%%%%%%%%%%%%%%%%%%%%%%%%%%%%%%%%%%%%%%%%%%%%%%%%%%%%%
%
\section{A Finite difference method}\label{sec3}
%
%%%%%%%%%%%%%%%%%%%%%%%%%%%%%%%%%%%%%%%%%%%%%%%%%%%%%%%%%%%%%%%%%%%%
%%%%%%%%%%%%%%%%%%%%%%%%%%%%%%%%%%%%%%%%%%%%%%%%%%%%%%%%%%%%%%%%%%%%
%%%%%%%%%%%%%%%%%%%%%%%%%%%%%%%%%%%%%%%%%%%%%%%%%%%%%%%%%%%%%%%%%%%%
%
%%%%%%%%%%%%%%%%%%%%%%%%%%%%%%%%%%%%%%%%%%%%%%%%%%%%%%%%%%%%%%%
%%%%%%%%%%%%%%%%%%%%%%%%%%%%%%%%%%%%%%%%%%%%%%%%%%%%%%%%%%%%%%%
In this section we construct and analyze a finite difference
method for approximating the solution of the ibvp
\eqref{NewProblem3}.
\subsection{Notation and preliminaries}
%%%%%%%%%%%%%%%%%%%%%%%%%%%%%%%%%%%%%%%%%%%%%%%%%%%%%%%%%%%%%%%
%%%%%%%%%%%%%%%%%%%%%%%%%%%%%%%%%%%%%%%%%%%%%%%%%%%%%%%%%%%%%%%
%
Let $J\in{\mathbb N}$ with $J\ge3$. We introduce a partition of
$[0,1]$ with width $h:=\frac{1}{J}$ and nodes $y_j:=j\,h$ for
$j=0,\dots,J$. Taking into account the homogeneous Dirichlet
boundary conditions at the endpoints of $I$ we define the space
$X_h$ of the finite difference approximations by
\begin{equation*}
X_h:=\{(v_j)_{j=0}^{\ssy J}\in{\mathbb C}^{\ssy
J+1}:\,\,\,v_0=v_{\ssy J}=0\}.
\end{equation*}
On $X_h$ we define a discrete $L^2(I)$ norm $\|\cdot\|_{0,h}$
given by $\|v\|_{0,h}:=\left(h\sum_{j=1}^{\ssy
J-1}|v_j|^2\right)^{\frac{1}{2}}$ for $v\in X_h$, which is
produced by the inner product $(\cdot,\cdot)_{0,h}$ defined by
$(v,w)_{0,h}:=h\sum_{j=1}^{\ssy J-1}v_j\,{\overline{w_j}}$ for
$v$, $w\in X_h$. Also, on $X_h$ we define a discrete $H^1(I)$
seminorm $|\cdot|_{1,h}$ by $|v|_{1,h}:=\left(h\sum_{j=0}^{\ssy
J-1}\left|\frac{v_{j+1}-v_{j}}{h}\right|^2\right)^{\frac{1}{2}}$
for $v\in X_h$, a discrete $H^1(I)$ norm $\|\cdot\|_{1,h}$ by
$\|v\|_{1,h}:=\left[\|v\|^2_{0,h}+|v|^2_{1,h}\right]^{\frac{1}{2}}$
for $v\in X_h$, and a discrete $L^{\infty}(I)$ norm
$|\cdot|_{\infty,h}$ by $|v|_{\infty,h}:=\max_{1\leq{j}\leq{\ssy
J-1}}|v_j|$ for $v\in X_h$.
\par
We define further the second-order difference operator
$\Delta_h:X_h\rightarrow X_h$ by
\begin{equation*}
(\Delta_hv)_j=\tfrac{v_{j-1}-2\,v_j+v_{j+1}}{h^2},\quad
j=1,\dots,J-1,\quad\forall\,v\in X_h,
\end{equation*}
and for $r\in[0,R]$, the discrete elliptic operator
$\Lambda_h(r):X_h\rightarrow X_h$ by
\begin{equation*}
(\Lambda_h(r)v)_j:=-(\Delta_hv)_j
-s^2(r)\,\tfrac{1+q\,\gamma(r,y_j)}{\alpha^2\,q}\,v_j,\quad
j=1,\dots,J-1,\quad\forall\,v\in X_h.
\end{equation*}
Also, we define the first-order difference operator
$\partial_h:X_h\rightarrow X_h$ by
\begin{equation*}
(\partial_hv)_j:=\tfrac{v_{j+1}-v_{j-1}}{2h},\quad j=1,\dots,J-1,
\quad\forall\,v\in X_h,
\end{equation*}
%
%the discrete operator $M_h:X_h\rightarrow X_h$ by
%
%\begin{equation*}
%(M_hv)_j:=\left\{
%
%\aligned
%
%&\tfrac{1}{2}\,y_2\,\tfrac{v_3-v_1}{2h},\quad j=1,\\
%
%&\tfrac{1}{2}\,\left(\,y_{j+1}\tfrac{v_{j+2}-v_{j}}{2h} +
%y_{j-1}\,\tfrac{v_j-v_{j-2}}{2h}\,\right),\quad j=2,\dots,J-2,\\
%
%&\tfrac{1}{2}\,y_{\ssy J-2}\,\tfrac{v_{J-1}-v_{J-3}}{2h},\quad j=J-1,\\
%
%\endaligned
%\right.\quad\forall\,v\in X_h,\\
%\end{equation*}
%
and introduce the auxiliary operators $I_h:X_h\rightarrow X_h$
given by $(I_hv)_j:=\tfrac{v_{j+1}+v_{j-1}}{2}$ for
$j=1,\dots,J-1$ and $v\in X_h$, and $\otimes:X_h^2\rightarrow X_h$
defined by $(v\otimes w)_j=v_j\,w_j$ for $j=1,\dots,J-1$ and $v$,
$w\in X_h$. Also, we let $\omega\in X_h$ be such that
$\omega_j:=y_j$ for $j=0,\dots,J-1$. For
$f:[0,1]\rightarrow{\mathbb C}$ we define $P_hf\in X_h$ by
$(P_hf)_j:=f(y_j)$ for $j=1,\dots,J-1$.
\par
It is easy to show that
\begin{equation}\label{Discrete_PF}
\|v\|_{0,h}\leq\,\tfrac{\sqrt{2}}{2}\,|v|_{1,h} \quad\forall\,v\in
X_h,
\end{equation}
\begin{equation}\label{Max_H1_Dominus}
|v|_{\infty,h}\leq\,|v|_{1,h}\quad\forall\,v\in X_h,
\end{equation}
\begin{equation}\label{Max_L2_Inverse}
|v|_{\infty,h}\leq\,h^{-\frac{1}{2}}\,\|v\|_{0,h}\quad\forall\,v\in
X_h,
\end{equation}
and
\begin{equation}\label{integration_by_parts}
(\Delta_hv,v)_{0,h}=-|v|_{1,h}^2\quad\forall\,v\in X_h.
\end{equation}
\par
Let $N\in{\mathbb N}$. We define a partition of the range
interval $[0,R]$ with nodes $(r^n)_{n=0}^{\ssy N}$ given  by
$r^n:=n\,k$ for $n=0,\dots,N$, and let $(u^n)_{n=0}^{\ssy
N}\subset X_h$ be such that $(u^n)_j:=u(r^n,y_j)$ for
$j=1,\dots,J-1$ and $n=0,\dots,N$, where $u$ is the solution of
\eqref{NewProblem2}. Also, we set
$r^{n+\frac{1}{2}}:=\frac{r^{n+1}+r^n}{2}$ for $n=0,\dots,N-1$.
%
%
%
%
%
%
%
%
%%%%%%%%%%%%%%%%%%%%%%%%%%%%%%%%%%%%%%%%%%%%%%%%%%%%%%%%%%%%%%%
% Lemma %%%%%%%%%%%%%%%%%%%%%%%%%%%%%%%%%%%%%%%%%%%%%%%%%%%%%%%
%%%%%%%%%%%%%%%%%%%%%%%%%%%%%%%%%%%%%%%%%%%%%%%%%%%%%%%%%%%%%%%
%
\begin{lemma}
For $v\in X_h$ we have
\begin{equation}\label{ErrEstim3}
\text{\rm
Re}(\omega\otimes\partial_hv,v)_{0,h}=-\tfrac{1}{2}\,(v,I_hv)_{0,h}.
\end{equation}
\end{lemma}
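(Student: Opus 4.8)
The plan is to prove the identity by discrete summation by parts, which is the difference-operator analogue of the continuous formula $\text{\rm Re}\int_0^1 y\,v_y\,\overline v\,dy=-\tfrac12\int_0^1|v|^2\,dy$, valid when $v(1)=0$. The averaging operator $I_h$ appears on the right-hand side precisely because the discrete Leibniz rule is not exact, so the clean pointwise quantity $|v_j|^2$ is replaced by $\text{\rm Re}(v_j\overline{(I_hv)_j})$.

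First I would unfold the left-hand side directly from the definitions. Using $\omega_j=jh$ and $(\partial_hv)_j=\frac{v_{j+1}-v_{j-1}}{2h}$, the powers of $h$ cancel and one obtains
\[
(\omega\otimes\partial_hv,v)_{0,h}=\tfrac{h}{2}\sum_{j=1}^{J-1}j\,(v_{j+1}-v_{j-1})\,\overline{v_j}.
\]
I would then split this into the two sums $\sum j\,v_{j+1}\overline{v_j}$ and $\sum j\,v_{j-1}\overline{v_j}$, and reindex the second one by $k=j-1$ so that both sums run over the comparable products $v_{k+1}\overline{v_k}$, but with coefficients $j$ and $j+1$ respectively.

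The key step is to take real parts and use the boundary conditions $v_0=v_J=0$ together with the elementary fact $\text{\rm Re}(v_k\overline{v_{k+1}})=\text{\rm Re}(v_{k+1}\overline{v_k})$. The term with $j=J-1$ in the first sum drops out because its factor is $v_J=0$, and the term with $k=0$ in the reindexed second sum drops out because its factor is $v_0=0$; the surviving coefficients $j$ and $j+1$ then differ by exactly $1$, which collapses the whole expression to
\[
\text{\rm Re}(\omega\otimes\partial_hv,v)_{0,h}=-\tfrac{h}{2}\sum_{j=1}^{J-2}\text{\rm Re}(v_{j+1}\overline{v_j}).
\]

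Finally I would expand the right-hand side: from $(I_hv)_j=\frac{v_{j+1}+v_{j-1}}{2}$ one gets $(v,I_hv)_{0,h}=\frac{h}{2}\sum_{j=1}^{J-1}v_j(\overline{v_{j+1}}+\overline{v_{j-1}})$, and after discarding the endpoint contributions ($j=J-1$ in the first piece, $j=1$ in the second) and reindexing, the two pieces combine through $v_j\overline{v_{j+1}}+v_{j+1}\overline{v_j}=2\,\text{\rm Re}(v_{j+1}\overline{v_j})$ into $h\sum_{j=1}^{J-2}\text{\rm Re}(v_{j+1}\overline{v_j})$, so that $-\tfrac12(v,I_hv)_{0,h}$ agrees exactly with the quantity computed above. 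The only real obstacle is bookkeeping: tracking the index shifts so that the Dirichlet endpoint terms are correctly identified and removed, and recognizing that the coefficient mismatch produced by the shift is exactly what generates the factor $-\tfrac12$.
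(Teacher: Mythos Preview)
Your proof is correct and follows essentially the same summation-by-parts idea as the paper. The only organizational difference is that the paper first reindexes to obtain the complex identity $(\omega\otimes\partial_hv,v)_{0,h}=-\overline{(\omega\otimes\partial_hv,v)_{0,h}}-(v,I_hv)_{0,h}$ and then takes real parts, whereas you take real parts earlier and exploit $\text{Re}(v_{j+1}\overline{v_j})=\text{Re}(v_j\overline{v_{j+1}})$ directly; both routes amount to the same reindexing and use of the Dirichlet endpoint conditions.
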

%
%%%%%%%%%%%%%%%%%%%%%%%%%%%%%%%%%%%%%%%%%%%%%%%%%%%%%%%%%%%%%%%
% End of Lemma %%%%%%%%%%%%%%%%%%%%%%%%%%%%%%%%%%%%%%%%%%%%%%%%
%%%%%%%%%%%%%%%%%%%%%%%%%%%%%%%%%%%%%%%%%%%%%%%%%%%%%%%%%%%%%%%
%
%
%
%
%
%
%%%%%%%%%%%%%%%%%%%%%%%%%%%%%%%%%%%%%%%%%%%%%%%%%%%%%%%%%%%%%%%
% Begin of the proof of Lemma %%%%%%%%%%%%%%%%%%%%%%%%%%%%%%%%%
%%%%%%%%%%%%%%%%%%%%%%%%%%%%%%%%%%%%%%%%%%%%%%%%%%%%%%%%%%%%%%%
%
\begin{proof}
For $v\in X_h$, we have
\begin{equation*}
\begin{split}
(\omega\otimes
\partial_hv,v)_{0,h}=&\,\tfrac{1}{2}
\,\,\sum_{j=1}^{\ssy J-1}y_{j-1}\,\,{\overline
{v_{j-1}}}\,\,v_{j} -\tfrac{1}{2}\,\sum_{j=1}^{\ssy J-1}
y_{j+1}\,\,{\overline{v_{j+1}}}\,\,v_{j}\\
%%%
=&\,\tfrac{1}{2}\,\sum_{j=1}^{\ssy
J-1}(y_{j-1}-y_j)\,v_{j}\,{\overline
{v_{j-1}}}-\tfrac{1}{2}\,\sum_{j=1}^{\ssy
J-1}y_j\,v_j\,{\overline{(v_{j+1}-v_{j-1})}}
-\tfrac{1}{2}\,\sum_{j=1}^{\ssy J-1}
(y_{j+1}-y_j)\,v_j\,{\overline{v_{j+1}}}\\
%%%
=&\,-{\overline{(\omega\otimes
\partial_hv,v)_{0,h}}}-h\,\sum_{j=1}^{\ssy J-1}v_j\,
\tfrac{{\overline{v_{j+1}+v_{j-1}}}}{2},\\
%%%
\end{split}
\end{equation*}
which easily yields \eqref{ErrEstim3}.
\end{proof}
%
%%%%%%%%%%%%%%%%%%%%%%%%%%%%%%%%%%%%%%%%%%%%%%%%%%%%%%%%%%%%%%%
% End of Proof of Lemma %%%%%%%%%%%%%%%%%%%%%%%%%%%%%%%%%%%%%%%
%%%%%%%%%%%%%%%%%%%%%%%%%%%%%%%%%%%%%%%%%%%%%%%%%%%%%%%%%%%%%%%
%
%
%
%
%
%
%
%%%%%%%%%%%%%%%%%%%%%%%%%%%%%%%%%%%%%%%%%%%%%%%%%%%%%%%%%%%%%%%
% Lemma %%%%%%%%%%%%%%%%%%%%%%%%%%%%%%%%%%%%%%%%%%%%%%%%%%%%%%%
%%%%%%%%%%%%%%%%%%%%%%%%%%%%%%%%%%%%%%%%%%%%%%%%%%%%%%%%%%%%%%%
%
\begin{lemma}
For $v\in X_h$ we have
\begin{equation}\label{Iden_Bound_1}
\|I_hv\|_{\ssy 0,h}\leq\,\|v\|_{0,h}.
\end{equation}
\end{lemma}
%
%%%%%%%%%%%%%%%%%%%%%%%%%%%%%%%%%%%%%%%%%%%%%%%%%%%%%%%%%%%%%%%
% End of Lemma %%%%%%%%%%%%%%%%%%%%%%%%%%%%%%%%%%%%%%%%%%%%%%%%
%%%%%%%%%%%%%%%%%%%%%%%%%%%%%%%%%%%%%%%%%%%%%%%%%%%%%%%%%%%%%%%
%
%
%
%
%
%
%%%%%%%%%%%%%%%%%%%%%%%%%%%%%%%%%%%%%%%%%%%%%%%%%%%%%%%%%%%%%%%
% Begin of the proof of Lemma %%%%%%%%%%%%%%%%%%%%%%%%%%%%%%%%%
%%%%%%%%%%%%%%%%%%%%%%%%%%%%%%%%%%%%%%%%%%%%%%%%%%%%%%%%%%%%%%%
%
\begin{proof}
For $v\in X_h$, we have
\begin{equation*}
\begin{split}
\|I_hv\|_{0,h}=&\,\tfrac{1}{2}\,\left(h\sum_{j=1}^{\ssy
J-1}\left|v_{j+1}+v_{j-1}\right|^2\right)^{\frac{1}{2}}\\
%%%
\leq&\,\tfrac{1}{2}\,\left(h\sum_{j=1}^{\ssy
J-1}|v_{j-1}|^2\right)^{\frac{1}{2}}
+\tfrac{1}{2}\,\left(h\sum_{j=1}^{\ssy
J-1}|v_{j+1}|^2\right)^{\frac{1}{2}}\\
%%%
\leq&\,\tfrac{1}{2}\,\left(h\sum_{j=0}^{\ssy
J-2}|v_{j}|^2\right)^{\frac{1}{2}}
+\tfrac{1}{2}\,\left(h\sum_{j=2}^{\ssy
J}|v_{j}|^2\right)^{\frac{1}{2}},\\
\end{split}
\end{equation*}
which easily yields \eqref{Iden_Bound_1}.
\end{proof}
%
%%%%%%%%%%%%%%%%%%%%%%%%%%%%%%%%%%%%%%%%%%%%%%%%%%%%%%%%%%%%%%%
% End of Proof of Lemma %%%%%%%%%%%%%%%%%%%%%%%%%%%%%%%%%%%%%%%
%%%%%%%%%%%%%%%%%%%%%%%%%%%%%%%%%%%%%%%%%%%%%%%%%%%%%%%%%%%%%%%
%
%
%
%
%
%%%%%%%%%%%%%%%%%%%%%%%%%%%%%%%%%%%%%%%%%%%%%%%%%%%%%%%%%%%%%%%
% Lemma %%%%%%%%%%%%%%%%%%%%%%%%%%%%%%%%%%%%%%%%%%%%%%%%%%%%%%%
%%%%%%%%%%%%%%%%%%%%%%%%%%%%%%%%%%%%%%%%%%%%%%%%%%%%%%%%%%%%%%%
%
\begin{lemma}
For $v\in X_h$ we have
\begin{equation}\label{DSobolev_Bound}
|v|_{\infty,h}\leq\,\sqrt{2}\,\left(\,\|v\|_{0,h}
+\|\partial_hv\|_{0,h}\right).
\end{equation}
\end{lemma}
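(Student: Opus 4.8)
The plan is to prove the discrete Sobolev-type bound
$$|v|_{\infty,h}\leq\sqrt{2}\,\bigl(\|v\|_{0,h}+\|\partial_h v\|_{0,h}\bigr)$$
by mimicking the classical one-dimensional Sobolev embedding $H^1(I)\hookrightarrow L^\infty(I)$ at the discrete level. Recall that in the continuous case one writes, for a point $y^\star$ realizing the maximum and an arbitrary point $y_0$, the identity $|u(y^\star)|^2 - |u(y_0)|^2 = \int_{y_0}^{y^\star}\frac{d}{dy}|u|^2\,dy$, integrates in $y_0$ over $I$, and estimates the result by $\|u\|_{0,I}^2 + 2\|u\|_{0,I}\|u'\|_{0,I}$. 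First I would set up the discrete analogue of this telescoping identity.

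The key steps, in order, are as follows. Let $m\in\{1,\dots,J-1\}$ be an index where $|v_m|=|v|_{\infty,h}$, and let $\ell\in\{1,\dots,J-1\}$ be arbitrary. Writing $|v_m|^2-|v_\ell|^2$ as a telescoping sum of the differences $|v_{j+1}|^2-|v_j|^2$ between $\ell$ and $m$, and using the algebraic identity
$$|v_{j+1}|^2-|v_j|^2 = 2\,\text{\rm Re}\bigl[(v_{j+1}-v_j)\,\overline{v_j}\bigr] + |v_{j+1}-v_j|^2,$$
one bounds $|v_m|^2$ by $|v_\ell|^2$ plus a sum of first-difference quantities over the full index range. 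The centered operator $\partial_h$ does not match the forward differences $v_{j+1}-v_j$ exactly, so here I would instead telescope in a way that produces centered differences, or—more robustly—bound each one-sided difference by $2\|\partial_h v\|_{0,h}$-type quantities; the natural route is to relate $\sum_j|v_{j+1}-v_{j-1}|^2$ to $\|\partial_h v\|_{0,h}^2$ directly via the definition $(\partial_h v)_j=\frac{v_{j+1}-v_{j-1}}{2h}$. After obtaining a pointwise bound on $|v_m|^2$ uniform in the choice of $\ell$, I would average over $\ell$ (equivalently, multiply by $h$ and sum over $\ell=1,\dots,J-1$, dividing by the total length, which is bounded by $1$) to convert the $|v_\ell|^2$ term into $\|v\|_{0,h}^2$ and the difference sums into $\|\partial_h v\|_{0,h}^2$, plus cross terms handled by Cauchy–Schwarz. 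Finally, applying $ab\le\frac12(a^2+b^2)$ to the cross term $\|v\|_{0,h}\|\partial_h v\|_{0,h}$ and collecting constants should yield $|v|_{\infty,h}^2\le 2\bigl(\|v\|_{0,h}+\|\partial_h v\|_{0,h}\bigr)^2$, which is exactly \eqref{DSobolev_Bound} after taking square roots.

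The main obstacle I anticipate is the bookkeeping of the constant $\sqrt{2}$, and in particular reconciling the centered difference $\partial_h$ with the forward/backward differences that arise naturally from telescoping $|v_{j+1}|^2-|v_j|^2$. Because $\partial_h$ averages neighboring forward differences, a naive telescoping produces forward differences whose discrete $L^2$ norm is the seminorm $|v|_{1,h}$, not $\|\partial_h v\|_{0,h}$; one must be careful since these two quantities are not equal. The cleanest fix is probably to telescope $|v_m|^2$ against an \emph{average} of $|v_{\ell}|^2$ and $|v_{\ell+1}|^2$ (or to pair the sum so that centered differences $v_{j+1}-v_{j-1}$ appear), so that the gradient contribution is genuinely controlled by $\|\partial_h v\|_{0,h}$ rather than by $|v|_{1,h}$. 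Getting precisely the constant $\sqrt{2}$ (rather than a larger generic constant) will require choosing this pairing judiciously and being attentive to the boundary indices, where $v_0=v_J=0$; the vanishing endpoints are in fact what lets the averaging argument close without boundary remainder terms.
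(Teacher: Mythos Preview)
Your proposal correctly identifies the central obstacle --- the centered operator $\partial_h$ sees $v_{j+1}-v_{j-1}$, not $v_{j+1}-v_j$ --- but the fixes you sketch (averaging $|v_\ell|^2$ and $|v_{\ell+1}|^2$, or ``pairing'' terms) are too vague to close the argument, and the $|v|^2$-telescoping with averaging over $\ell$ is actually the wrong framework here. The decisive structural fact you are missing is that $\partial_h$ \emph{decouples the even- and odd-indexed sublattices}: the quantity $(\partial_hv)_j=\tfrac{v_{j+1}-v_{j-1}}{2h}$ only links nodes of the same parity. Consequently, if you telescope $|v_{i_0}|$ (not $|v_{i_0}|^2$) in steps of two along the sublattice of the same parity as $i_0$, every difference that appears is $v_{j+2}-v_j=2h(\partial_hv)_{j+1}$, and Cauchy--Schwarz over at most $J/2$ terms of width $2h$ gives the factor $\sqrt{2}$ directly.

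The paper's proof does exactly this, with a case split you did not anticipate. If $i_0$ is even, telescope from $v_0=0$ along even indices and obtain $|v_{i_0}|\le\sqrt{2}\,\|\partial_hv\|_{0,h}$. If $i_0$ is odd and $J$ is odd, telescope from $v_J=0$ along odd indices, same bound. The only case where the $\|v\|_{0,h}$ term is actually needed is $i_0$ odd and $J$ even: then the odd sublattice has no zero endpoint, so one writes $|v_{i_0}|\le |v_{2\ell_\star-1}|+\sum|v_{2\ell+1}-v_{2\ell-1}|$ with $\ell_\star$ chosen to minimize $|v_{2\ell-1}|$, and bounds that minimum by $\sqrt{2}\,\|v\|_{0,h}$ (since $J_\star$ terms of mesh width $2h$ cover $[0,1]$). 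Your averaging-over-$\ell$ device is a continuous-Sobolev reflex; here it is replaced by this minimum-on-the-odd-sublattice argument, which is what makes $\|v\|_{0,h}$ enter with the correct constant.
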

%
%%%%%%%%%%%%%%%%%%%%%%%%%%%%%%%%%%%%%%%%%%%%%%%%%%%%%%%%%%%%%%%
% End of Lemma %%%%%%%%%%%%%%%%%%%%%%%%%%%%%%%%%%%%%%%%%%%%%%%%
%%%%%%%%%%%%%%%%%%%%%%%%%%%%%%%%%%%%%%%%%%%%%%%%%%%%%%%%%%%%%%%
%
%
%
%
%%%%%%%%%%%%%%%%%%%%%%%%%%%%%%%%%%%%%%%%%%%%%%%%%%%%%%%%%%%%%%%
% Begin of the proof of Lemma %%%%%%%%%%%%%%%%%%%%%%%%%%%%%%%%%
%%%%%%%%%%%%%%%%%%%%%%%%%%%%%%%%%%%%%%%%%%%%%%%%%%%%%%%%%%%%%%%
%
\begin{proof}
Let $v\in X_h$ with $v\not=0$. Then, there exists
$i_0\in\{1,\dots,J-1\}$ such that $|v|_{\infty,h}=|v_{i_0}|$. If
$i_0$ is even, i.e. $i_0=2\,m_0$ for some $m_0\in\nset$, then we
have
\begin{equation}\label{aivali_1}
\begin{split}
|v_{i_0}|=&\,|v_{2m_0}|\\
%%%
\leq&\,2\,\sum_{\ell=0}^{m_0-1}h\,\left|
\tfrac{v_{2(\ell+1)}-v_{2\ell}}{2h}\right|\\
%%%
\leq&\,2\,\left(h\sum_{\ell=0}^{m_0-1}1\right)^{\frac{1}{2}}\,
\left(h\sum_{\ell=0}^{m_0-1}\left|
\tfrac{v_{2(\ell+1)}-v_{2\ell}}{2h}\right|^2\right)^{\frac{1}{2}}\\
%%%
\leq&\,\sqrt{2}\,\,\|\partial_hv\|_{0,h}.\\
\end{split}
\end{equation}
Let us now assume that $i_0$ is odd, i.e. $i_0=2\,m_0-1$ for some
$m_0\in\nset$. If $J$ is odd, i.e. $J=2\,J_{\star}+1$ for some
$J_{\star}\in\nset$, then
\begin{equation}\label{aivali_2}
\begin{split}
|v_{i_0}|=&\,|v_{2m_0-1}|\\
%%%
\leq&\,2\,h\,\sum_{\ell=m_0}^{\ssy J_{\star}}\left|
\tfrac{v_{2\ell+1}-v_{2\ell-1}}{2h}\right|\\
%%%
%%%
\leq&\,2\,\left(h\sum_{\ell=m_0}^{\ssy
J_{\star}}1\right)^{\frac{1}{2}}\, \left(h\sum_{\ell=m_0}^{\ssy
J_{\star}}\left|
\tfrac{v_{2(\ell+1)}-v_{2\ell}}{2h}\right|^2\right)^{\frac{1}{2}}\\
%%%
\leq&\,\sqrt{2}\,\,\|\partial_hv\|_{0,h}.\\
\end{split}
\end{equation}
Now, we assume that $J$ is even, i.e. $J=2\,J_{\star}$ for some
$J_{\star}\in\nset$. We define $(w_{\ell})_{\ell=1}^{\ssy
J_{\star}}\subset\cset$, by $w_{\ell}=v_{2\ell-1}$ for
$\ell=1,\dots,J_{\star}$. Also, let
$\ell_{\star}\in\{1,\dots,J_{\star}\}$ such that
$|w_{\ell_{\star}}|=\min_{1\leq\ell\leq{J_{\star}}}|w_{\ell}|$.
Then, we have
\begin{equation}\label{aivali_3}
\begin{split}
|w_{m_0}|\leq&\,|w_{\ell_{\star}}|+\sum_{\ell=1}^{\ssy
J_{\star}-1}|w_{\ell+1}-w_{\ell}|\\
%%%
\leq&\,2\,h\,\sum_{\ell=1}^{\ssy
J_{\star}}|w_{\ell}|+2\,h\,\sum_{\ell=1}^{\ssy
J_{\star}-1}\left|\tfrac{w_{\ell+1}-w_{\ell}}{2h}
\right|\\
%%%
\leq&\,\sqrt{2}\,\|v\|_{0,h}+\sqrt{2}\,\|\partial_hv\|_{0,h}.
\end{split}
\end{equation}
Thus, \eqref{DSobolev_Bound} follows from \eqref{aivali_1},
\eqref{aivali_2} and \eqref{aivali_3}.
\end{proof}
%
%%%%%%%%%%%%%%%%%%%%%%%%%%%%%%%%%%%%%%%%%%%%%%%%%%%%%%%%%%%%%%%
% End of Proof of Lemma %%%%%%%%%%%%%%%%%%%%%%%%%%%%%%%%%%%%%%%
%%%%%%%%%%%%%%%%%%%%%%%%%%%%%%%%%%%%%%%%%%%%%%%%%%%%%%%%%%%%%%%
%
%
%
%
%
%%%%%%%%%%%%%%%%%%%%%%%%%%%%%%%%%%%%%%%%%%%%%%%%%%%%%%%%%%%%%%%%%%%%
% begin of lemma
%%%%%%%%%%%%%%%%%%%%%%%%%%%%%%%%%%%%%%%%%%%%%%%%%%%%%%%%%%%%%%%%%%%%
%
\subsection{Properties of the discrete elliptic operator
$\Lambda_h$ and its inverse}\label{sd4.2}
\begin{lemma}
We assume that
\begin{equation}\label{DCE_40}
C_{\ssy D\!E\!B}:=\inf_{\ssy\Omega}\left[\tfrac{1}{(C_{\ssy
D\!P\!F})^2}-\tfrac{s^2}{\alpha^2\,|q|^2}\,\left(\text{\rm
Re}(q)+|q|^2\,\text{\rm Re}(\gamma)\right)\right]>0,
\end{equation}
where $C_{\ssy D\!P\!F}\in(0,\tfrac{\sqrt{2}}{2}]$ is the optimal
constant in \eqref{Discrete_PF}, or,
that there exists $\delta_{\star}\in\{1,-1\}$ such that
\begin{equation}\label{DCE_60}
C_{\ssy D\!B\!B}:=\inf_{\ssy\Omega}\left[\,
\tfrac{\delta_{\star}\,s^2}{\alpha^2\,|q|^2}\,\left(\text{\rm
Im}(q)-|q|^2\,\text{\rm Im}(\gamma)\right)\,\right]>0.
\end{equation}
Then, we have
\begin{equation}\label{DCE_42}
\max_{r\in[0,R]}\|T_h(r)v\|_{1,h} \leq\,C\,\|v\|_{0,h}
\quad\forall\,v\in X_h,
\end{equation}
where $T_h(r):=\Lambda_h^{-1}(r)$ for $r\in[0,R]$.
\end{lemma}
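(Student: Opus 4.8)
The plan is to transcribe the proof of the continuous bound \eqref{COE_3} into the discrete setting, replacing integration by parts with the summation-by-parts identity \eqref{integration_by_parts} and the continuous Poincar\'e--Friedrichs inequality by its discrete counterpart \eqref{Discrete_PF}. First I would establish the discrete energy identity that plays the role of \eqref{Basic_Tool_1}: for $v\in X_h$ and $r\in[0,R]$, pairing $\Lambda_h(r)v$ with $v$ in $(\cdot,\cdot)_{0,h}$, using $(\Delta_h v,v)_{0,h}=-|v|_{1,h}^2$ and $\tfrac{1+q\gamma}{q}=\tfrac{1}{q}+\gamma$, and taking real parts, yields
\begin{equation*}
\text{Re}(\Lambda_h(r)v,v)_{0,h}=|v|_{1,h}^2-h\sum_{j=1}^{J-1}\tfrac{s^2(r)\,\text{Re}(\overline q+|q|^2\gamma(r,y_j))}{\alpha^2\,|q|^2}\,|v_j|^2 .
\end{equation*}

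Next, under hypothesis \eqref{DCE_40} I would bound $|v|_{1,h}^2$ below by $\tfrac{1}{(C_{\ssy D\!P\!F})^2}\|v\|_{0,h}^2$ via \eqref{Discrete_PF}, so the bracketed coefficient collapses to the infimum defining $C_{\ssy D\!E\!B}$ and gives the coercivity bound $\text{Re}(\Lambda_h(r)v,v)_{0,h}\ge C_{\ssy D\!E\!B}\,\|v\|_{0,h}^2$. Under hypothesis \eqref{DCE_60} I would instead take imaginary parts (where the $|v|_{1,h}^2$ term is absent) and multiply by $\delta_{\star}$ to obtain $\big|\,\text{Im}(\Lambda_h(r)v,v)_{0,h}\,\big|\ge C_{\ssy D\!B\!B}\,\|v\|_{0,h}^2$. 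Since $X_h$ is finite dimensional, either lower bound forces $\Lambda_h(r)v=0\Rightarrow v=0$, so $\Lambda_h(r)$ is invertible and $T_h(r)$ is well defined. Putting $v=T_h(r)\psi$, hence $\Lambda_h(r)v=\psi$, and applying the Cauchy--Schwarz inequality to $(\psi,v)_{0,h}$ then produces the discrete analogue of \eqref{COE_4}, namely $\|T_h(r)\psi\|_{0,h}\le C\,\|\psi\|_{0,h}$.

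To recover the full $\|\cdot\|_{1,h}$ estimate I would solve the discrete energy identity for $|v|_{1,h}^2$ with $v=T_h(r)\psi$, estimating $\text{Re}(\Lambda_h(r)v,v)_{0,h}=\text{Re}(\psi,v)_{0,h}\le\|\psi\|_{0,h}\,\|v\|_{0,h}$ and the coefficient sum by $C\|v\|_{0,h}^2$; combined with the $L^2$-type bound just obtained this gives $|T_h(r)\psi|_{1,h}^2\le C\,\|\psi\|_{0,h}^2$, exactly as in the passage to \eqref{COE_7}, and adding $\|T_h(r)\psi\|_{0,h}^2$ yields \eqref{DCE_42}. The $\max_{r}$ (uniformity in $r$) follows because $s$ and $\gamma$ are smooth on the compact set $\Omega$, so $C_{\ssy D\!E\!B}$, $C_{\ssy D\!B\!B}$ and the coefficient bounds are uniform in $r$. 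The one point deserving care is that the resulting $C$ be \emph{independent of $h$} as well, which is where the observation $C_{\ssy D\!P\!F}\in(0,\tfrac{\sqrt2}{2}]$ enters: the discrete Poincar\'e--Friedrichs constant is bounded above uniformly in the mesh, so the coercivity constants, and hence $C$ in \eqref{DCE_42}, do not degenerate as $h\to0$. Beyond this uniformity bookkeeping the argument is a faithful discrete copy of the continuous one and presents no essential obstacle, the summation-by-parts identity \eqref{integration_by_parts} doing the work of integration by parts and the boundary terms vanishing because elements of $X_h$ vanish at $j=0,J$.
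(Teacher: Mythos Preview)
Your proposal is correct and follows essentially the same approach as the paper's own proof: both establish the discrete energy identity $\text{Re}(\Lambda_h(r)v,v)_{0,h}=|v|_{1,h}^2-\tfrac{s^2(r)}{\alpha^2|q|^2}\,h\sum_j[\text{Re}(q)+|q|^2\text{Re}(\gamma)]|v_j|^2$ via \eqref{integration_by_parts}, derive coercivity from either \eqref{DCE_40} (real part plus discrete Poincar\'e--Friedrichs) or \eqref{DCE_60} (imaginary part), obtain the $\|\cdot\|_{0,h}$ bound on $T_h(r)$ by Cauchy--Schwarz, and then read off the $|\cdot|_{1,h}$ bound from the energy identity. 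Your added remarks on invertibility in finite dimensions and on $h$-uniformity of $C$ are correct elaborations but do not change the argument.
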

%
%
%%%%%%%%%%%%%%%%%%%%%%%%%%%%%%%%%%%%%%%%%%%%%%%%%%%%%%%%%%%%%%%%%%%%
% end of lemma
%%%%%%%%%%%%%%%%%%%%%%%%%%%%%%%%%%%%%%%%%%%%%%%%%%%%%%%%%%%%%%%%%%%%
%
%
%
%
%%%%%%%%%%%%%%%%%%%%%%%%%%%%%%%%%%%%%%%%%%%%%%%%%%%%%%%%%%%%%%%%%%%%
% begin of the proof of the lemma
%%%%%%%%%%%%%%%%%%%%%%%%%%%%%%%%%%%%%%%%%%%%%%%%%%%%%%%%%%%%%%%%%%%%
%
\begin{proof}
Let $r\in[0,R]$ and $v\in X_h$. Then, we have
\begin{equation}\label{DCE_50}
\text{\rm Re}(\Lambda_h(r)v,v)_{\ssy
0,h}=|v|_{1,h}^2-\tfrac{s^2(r)}{\alpha^2\,|q|^2}\,h\,\sum_{j=1}^{\ssy
J-1}\left[\text{\rm Re}(q)+|q|^2\,\text{\rm
Re}(\gamma(r,y_j))\right]\,|v_j|^2.
\end{equation}
\par
When \eqref{DCE_40} holds, then \eqref{DCE_50} and
\eqref{Discrete_PF} yield
\begin{equation}\label{DCE_41}
\begin{split}
\text{\rm Re}(\Lambda_h(r)v,v)_{\ssy 0,h}\ge&\,h\sum_{j=1}^{\ssy
J-1}\left[\tfrac{1}{(C_{\ssy
D\!P\!F})^2}-\tfrac{s^2(r)}{\alpha^2\,|q|^2}\, (\text{\rm
Re}(q)+|q|^2\,\text{\rm Re}(\gamma(r,y_j)))\,\right]\,\,|v_j|^2\\
%%%
\ge&\,C_{\ssy D\!E\!B}\,\|v\|_{0,h}^2 \quad\forall\,v\in
X_h,\quad\forall\,r\in[0,R].\\
\end{split}
\end{equation}
\par
When \eqref{DCE_60} holds, then, we have
\begin{equation}\label{DCE_70}
\begin{split}
\big|\,\text{\rm Im}(\Lambda_h(r)v,v)_{\ssy 0,h}\,\big|\ge&\,
\delta_{\star}\,\text{\rm Im}(\Lambda_h(r)v,v)_{\ssy
0,h}\\
%%%
\ge&\,\delta_{\star}\,\tfrac{s^2(r)}{\alpha^2\,|q|^2}\,h\,\sum_{j=1}^{\ssy
J-1}\left[\text{\rm Im}(q)-|q|^2\,\text{\rm
Im}(\gamma(r,y_j))\right]\,|v_j|^2\\
%%%
\ge&\,C_{\ssy D\!B\!B}\,\|v\|_{0,h}^2\quad\forall\,v\in
X_h,\quad\forall\,r\in[0,R].\\
\end{split}
\end{equation}
Now, using \eqref{DCE_41} or \eqref{DCE_70}, and the
Cauchy-Schwarz inequality we arrive at
\begin{equation}\label{DCE_51}
\|T_h(r)v\|_{0,h}\leq\,C\,\|v\|_{0,h}.
\end{equation}
Next, we use \eqref{DCE_50} and \eqref{DCE_51} to get
\begin{equation}\label{DCE_52}
\begin{split}
|T_h(r)v|_{1,h}^2\leq&\,C\,\left(\,\|T_h(r)v\|^2_{0,h}
+\|v\|_{0,h}\,\|T_h(r)v\|_{0,h}\,\right)\\
%%%
\leq&\,C\,\|v\|_{0,h}^2.\\
\end{split}
\end{equation}
Thus, \eqref{DCE_42} follows easily from \eqref{DCE_51} and
\eqref{DCE_52}.
\end{proof}
%
%%%%%%%%%%%%%%%%%%%%%%%%%%%%%%%%%%%%%%%%%%%%%%%%%%%%%%%%%%%%%%%%%%%%
% end of the proof of the lemma
%%%%%%%%%%%%%%%%%%%%%%%%%%%%%%%%%%%%%%%%%%%%%%%%%%%%%%%%%%%%%%%%%%%%
%
%
%
%
%
%%%%%%%%%%%%%%%%%%%%%%%%%%%%%%%%%%%%%%%%%%%%%%%%%%%%%%%%%%%%%%%
%%%%%%%%%%%%%%%%%%%%%%%%%%%%%%%%%%%%%%%%%%%%%%%%%%%%%%%%%%%%%%%
%\subsection{A finite difference treatment of the elliptic
%operator $\Lambda$}
%%%%%%%%%%%%%%%%%%%%%%%%%%%%%%%%%%%%%%%%%%%%%%%%%%%%%%%%%%%%%%%
%%%%%%%%%%%%%%%%%%%%%%%%%%%%%%%%%%%%%%%%%%%%%%%%%%%%%%%%%%%%%%%
%
%
%
%
%
%
%%%%%%%%%%%%%%%%%%%%%%%%%%%%%%%%%%%%%%%%%%%%%%%%%%%%%%%%%%%%%%%%%%%%
% begin of proposition
%%%%%%%%%%%%%%%%%%%%%%%%%%%%%%%%%%%%%%%%%%%%%%%%%%%%%%%%%%%%%%%%%%%%
%
\begin{proposition}\label{LHMMA_2}
We assume that \eqref{DCE_40} or \eqref{DCE_60} hold.
Then, there exist positive constants $C_{\ssy A}$ and $C_{\ssy
B}$ such that
\begin{equation}\label{DCoe_10}
\|T_h(r)P_h\phi-P_hT(r)\phi\|_{1,h} \leq\,C_{\ssy A}\,h^2\,
|\phi|_{2,\infty,{\ssy I}}, \quad\forall\,\phi\in C^2(I;{\mathbb
C}),\quad\forall\,r\in[0,R],
\end{equation}
and
\begin{equation}\label{DCoe_1000}
\begin{split}
\left\|\,[T_h(r)P_h\phi-T_h(\tau)P_h\phi]-[P_hT(r)\phi-P_hT(\tau)\phi]\,\right\|_{1,h}
\leq&\,C_{\ssy B}\,\Big[\,h^3\,\,|\phi|_{3,\infty,{\ssy I}}
+h^2\,|r-\tau|\,|\phi|_{2,\infty,{\ssy I}}\,\Big]\\
%%%
&\quad\quad\quad\forall\,\phi\in C^3(I;{\mathbb C}),
\quad\forall\,r,\tau\in[0,R].\\
\end{split}
\end{equation}
\end{proposition}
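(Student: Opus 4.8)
The plan is to prove both estimates by the standard \emph{consistency plus stability} paradigm, using the discrete stability bound \eqref{DCE_42} for $T_h$ together with the elliptic regularity estimates \eqref{Ringer_1} and \eqref{Cagaroo_1} for the continuous solutions. Throughout I set $w(\sigma):=T(\sigma)\phi$ and $W_h(\sigma):=T_h(\sigma)P_h\phi$, and write $E(\sigma):=W_h(\sigma)-P_hw(\sigma)$ for the nodal error, so that the left-hand side of \eqref{DCoe_10} is $\|E(r)\|_{1,h}$ and that of \eqref{DCoe_1000} is $\|E(r)-E(\tau)\|_{1,h}$. Since $\Lambda(\sigma)w(\sigma)=\phi$ and $\Lambda_h(\sigma)W_h(\sigma)=P_h\phi$, applying $\Lambda_h(\sigma)$ gives $\Lambda_h(\sigma)E(\sigma)=P_h\phi-\Lambda_h(\sigma)P_hw(\sigma)=:\tau_h(\sigma)$, the truncation error, so $E(\sigma)=T_h(\sigma)\tau_h(\sigma)$. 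The crucial simplification is that in $\tau_h(\sigma)$ the zeroth-order terms cancel identically, since $\Lambda$ and $\Lambda_h$ use the same pointwise coefficient $\zeta(\sigma,y_j)$; hence $\tau_h(\sigma)_j=(\Delta_hP_hw(\sigma))_j-w''(\sigma,y_j)$ is a pure symmetric-second-difference consistency error. Note that at the nodes adjacent to the endpoints this expansion is valid without any special treatment, because $w(\sigma,y_0)=w(\sigma,0)=0$ and $w(\sigma,y_J)=w(\sigma,1)=0$ coincide with the imposed boundary values --- this is exactly where the present $\Lambda_h$ differs from the full third-order scheme.

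For \eqref{DCoe_10} I would first invoke \eqref{Ringer_1} (with $m=2$) to obtain $w(r)\in C^4$ with $|w(r)|_{4,\infty,I}\le C|\phi|_{2,\infty,I}$; a Taylor expansion of the second difference then yields $\|\tau_h(r)\|_{0,h}\le C\,h^2\,|w(r)|_{4,\infty,I}\le C\,h^2\,|\phi|_{2,\infty,I}$. Feeding this into the stability bound \eqref{DCE_42} for $E(r)=T_h(r)\tau_h(r)$ gives $\|E(r)\|_{1,h}\le C\|\tau_h(r)\|_{0,h}$, which is \eqref{DCoe_10}.

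For \eqref{DCoe_1000} I would split
\[
E(r)-E(\tau)=T_h(r)\big[\tau_h(r)-\tau_h(\tau)\big]+\big[T_h(r)-T_h(\tau)\big]\tau_h(\tau),
\]
bound the first summand by $C\|\tau_h(r)-\tau_h(\tau)\|_{0,h}$ via \eqref{DCE_42}, and treat the second through the discrete resolvent identity $T_h(r)-T_h(\tau)=T_h(r)\big[\Lambda_h(\tau)-\Lambda_h(r)\big]T_h(\tau)$. Because $\Delta_h$ is $r$-independent, $\Lambda_h(\tau)-\Lambda_h(r)$ is multiplication by $-(\zeta(\tau,\cdot)-\zeta(r,\cdot))$, which is $O(|r-\tau|)$ by the smoothness of $s$ and $\gamma$; since $T_h(\tau)\tau_h(\tau)=E(\tau)$, two applications of \eqref{DCE_42} together with the already-established \eqref{DCoe_10} give $C\,|r-\tau|\,\|E(\tau)\|_{0,h}\le C\,h^2\,|r-\tau|\,|\phi|_{2,\infty,I}$ for the second summand, supplying the second term of \eqref{DCoe_1000}.

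The main work, and the only genuinely delicate step, is the bound on $\|\tau_h(r)-\tau_h(\tau)\|_{0,h}$, which must produce \emph{both} terms of \eqref{DCoe_1000}. Here I would use the hypothesis $\phi\in C^3$ and \eqref{Ringer_1} (with $m=3$) to gain one further order of regularity, $w(\sigma)\in C^5$ with $|w(\sigma)|_{5,\infty,I}\le C|\phi|_{3,\infty,I}$, and expand the second difference one term further as $\tau_h(\sigma)_j=\tfrac{h^2}{12}\,w^{(4)}(\sigma,y_j)+\rho_h(\sigma)_j$ with $|\rho_h(\sigma)_j|\le C\,h^3\,|\phi|_{3,\infty,I}$. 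For the remainder I would bound the difference crudely, $|\rho_h(r)_j-\rho_h(\tau)_j|\le 2C\,h^3\,|\phi|_{3,\infty,I}$, which is where the $h^3|\phi|_{3,\infty,I}$ term --- \emph{without} a factor $|r-\tau|$ --- is born; for the leading term I would instead extract the range increment, $\tfrac{h^2}{12}|w^{(4)}(r,y_j)-w^{(4)}(\tau,y_j)|\le C\,h^2\,|r-\tau|\,\max_\sigma|\partial_r w^{(4)}(\sigma,\cdot)|$, and control $\partial_r w^{(4)}$ through \eqref{Cagaroo_1} (with $\ell=1$), giving the $h^2|r-\tau||\phi|_{2,\infty,I}$ term. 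Combining the two summands yields \eqref{DCoe_1000}. The subtlety I expect to demand the most care is precisely this asymmetric treatment of the truncation error --- taking the extra power of $h$ from the higher-order Taylor remainder but forgoing the $|r-\tau|$ factor there, while doing the opposite for the leading $w^{(4)}$ term --- together with the bookkeeping of which seminorm $|\phi|_{m,\infty,I}$ is produced at each step via \eqref{Ringer_1} and \eqref{Cagaroo_1}.
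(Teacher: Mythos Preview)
Your proposal is correct and follows essentially the same approach as the paper: the decomposition $\Lambda_h(r)E(r)=\tau_h(r)$ together with \eqref{DCE_42} for \eqref{DCoe_10}, and for \eqref{DCoe_1000} the identity $\Lambda_h(r)\big(E(r)-E(\tau)\big)=[\tau_h(r)-\tau_h(\tau)]+\big(\Lambda_h(\tau)-\Lambda_h(r)\big)E(\tau)$, which is exactly your resolvent-identity splitting rewritten. The only noteworthy technical difference is in how $\tau_h(r)-\tau_h(\tau)$ is bounded: the paper keeps the Lagrange remainder $\tau_h(\sigma)_j=\tfrac{h^2}{12}\,w^{(4)}(\sigma,\xi_j(\sigma))$ with $\sigma$-dependent intermediate points $\xi_j(\sigma)\in(y_{j-1},y_{j+1})$, then adds and subtracts $w^{(4)}(\tau,\xi_j(r))$ so that one piece picks up $|r-\tau|$ via $\partial_r w^{(4)}$ and the other picks up $|\xi_j(r)-\xi_j(\tau)|\le 2h$ via $w^{(5)}$; you instead expand one Taylor order further, isolate the leading $\tfrac{h^2}{12}w^{(4)}(\sigma,y_j)$ at the fixed node, and bound the $O(h^3)$ remainder crudely. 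Both devices produce the same two terms $h^2|r-\tau|$ and $h^3$ and invoke \eqref{Ringer_1}, \eqref{Cagaroo_1} in the same way, so the arguments are interchangeable.
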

%
%
%%%%%%%%%%%%%%%%%%%%%%%%%%%%%%%%%%%%%%%%%%%%%%%%%%%%%%%%%%%%%%%%%%%%
% end of proposition
%%%%%%%%%%%%%%%%%%%%%%%%%%%%%%%%%%%%%%%%%%%%%%%%%%%%%%%%%%%%%%%%%%%%
%
%
%
%%%%%%%%%%%%%%%%%%%%%%%%%%%%%%%%%%%%%%%%%%%%%%%%%%%%%%%%%%%%%%%%%%%%
% begin of the proof of proposition
%%%%%%%%%%%%%%%%%%%%%%%%%%%%%%%%%%%%%%%%%%%%%%%%%%%%%%%%%%%%%%%%%%%%
%
%
\begin{proof}
Let $r$, $\tau\in[0,R]$, $\phi\in C^2(I;{\mathbb C})$,
$\psi(r,y):=(T(r)\phi)(y)$ and $E(r)\in X_h$ defined by
\begin{equation*}
E(r):=T_h(r)P_h\phi-P_hT(r)\phi.
\end{equation*}
Then, we have that
\begin{equation}\label{DCoe_11}
\Lambda_h(r)E(r)=\eta(r)
\end{equation}
where $\eta(r)\in X_h$ with
\begin{equation*}
\begin{split}
(\eta(r))_j=&\,-\left[\,\psi_{yy}(r,y_j)
-\tfrac{\psi(r,y_{j-1})-2\psi(r,y_j)+\psi(r,y_{j+1})}{h^2}
\right]\\
%%%
=&\,\tfrac{h^2}{12}\,\psi_{yyyy}(r,\xi_j(r)),\quad
j=1,\dots,J-1,\\
\end{split}
\end{equation*}
with $\xi_j(r)\in(y_{j-1},y_{j+1})$ for $j=1,\dots,J-1$. Thus,
along with \eqref{Ringer_1}, we obtain that
\begin{equation}\label{DCoe_12}
\begin{split}
|\eta(r)|_{\infty,h}\leq&\,\tfrac{h^2}{12} \,\,\max_{\ssy
I}|\psi_{yyyy}(r,\cdot)|\\
%%%
\leq&\,C\,h^2\,|\phi|_{2,\infty,{\ssy I}}.\\
\end{split}
\end{equation}
We use \eqref{DCoe_11} and \eqref{DCE_42} to have
\begin{equation}\label{DCoe_122}
\begin{split}
\|E(r)\|_{1,h}=&\,\|T_h(r)\eta(r)\|_{1,h}\\
%%%
\leq&\,C\,\|\eta\|_{0,h}\\
%%%
\leq&\,C\,|\eta|_{\infty,h}.\\
\end{split}
\end{equation}
If we combine \eqref{DCoe_122} and \eqref{DCoe_12},
\eqref{DCoe_10} easily follows.
\par
Now, assuming that $r\leq\tau$, we have
\begin{equation*}
\begin{split}
(\eta(r))_j-(\eta(\tau))_j=&\,\tfrac{h^2}{12}\,\left\{\,\left[\,
\psi_{yyyy}(r,\xi_j(r))-\psi_{yyyy}(\tau,\xi_j(r))\,\right]
+\left[\,\psi_{yyyy}(\tau,\xi_j(r))
-\psi_{yyyy}(\tau,\xi_j(\tau))\right]\,\right\}\\
%%%
=&\,\tfrac{h^2}{12}\,\left[\,(r-\tau)\,
\partial_r\partial_y^4\psi(\mu_j(r,\tau),\xi_j(r))
+(\xi_j(r)-\xi_j(\tau))\,\partial_y^5\psi(\tau,{\widetilde\xi}_j(r,\tau))
\,\right],\quad j=1,\dots,J-1,\\
\end{split}
\end{equation*}
with $\mu_j(r,\tau)\in(r,\tau)$ and
${\widetilde\xi}_j(r,\tau)\in(y_{j-1},y_{j+1})$ for
$j=1,\dots,J-1$. Thus, using \eqref{Ringer_1} and
\eqref{Cagaroo_1}, we obtain that
\begin{equation}\label{DCoe_170}
\begin{split}
|\eta(r)-\eta(\tau)|_{\infty,h}\leq&\,\tfrac{h^2}{12}
\,\,\left[\,|r-\tau|\,\max_{\ssy \Omega}|\psi_{yyyyr}|
+2\,h\,\max_{\ssy \Omega}|\psi_{yyyyy}|\,\right]\\
%%%
\leq&\,C\,h^2 \,\,\left(\,|r-\tau|\,|\phi|_{\infty,{\ssy I}}
+h\,|\phi|_{3,\infty,{\ssy I}}\,\right).\\
\end{split}
\end{equation}
Using \eqref{DCoe_11} we see that
\begin{equation}\label{DCoe_171}
\Lambda_h(r)(E(r)-E(\tau))=[\eta(r)-\eta(\tau)]+Z(r,\tau)\otimes
E(\tau)
\end{equation}
where $Z(r,\tau)\in X_h$ is defined by
\begin{equation}\label{DCoe_172}
(Z(r,\tau))_j:=-\left[\,-s^2(r)\,\tfrac{1+q\,\gamma(r,y_j)}{\alpha^2\,q}
+s^2(\tau)\,\tfrac{1+q\,\gamma(\tau,y_j)}{\alpha^2\,q}\,\right],\quad
j=1,\dots,J-1.
\end{equation}
Then, from \eqref{DCoe_172} we obtain
\begin{equation}\label{DCoe_173}
|Z(r,\tau)|_{\infty,h}\leq\,C\,|r-\tau|.
\end{equation}
We use \eqref{DCoe_171} and \eqref{DCE_42} to get
\begin{equation}\label{DCoe_174}
\begin{split}
\|E(r)-E(\tau)\|_{1,h}\leq&\,C\,\left(\,\|\eta(r)-\eta(\tau)\|_{0,h}
+\|Z(r,\tau)\otimes E(\tau)\|_{0,h}\,\right)\\
%%%
\leq&\,C\,\left(\,|\eta(r)-\eta(\tau)|_{\infty,h}
+|Z(r,\tau)|_{\infty,h}\,\|E(\tau)\|_{0,h}\,\right).\\
\end{split}
\end{equation}
Then, \eqref{DCoe_1000} follows easily from \eqref{DCoe_174},
\eqref{DCoe_173} and \eqref{DCoe_170}.
\end{proof}
%
%%%%%%%%%%%%%%%%%%%%%%%%%%%%%%%%%%%%%%%%%%%%%%%%%%%%%%%%%%%%%%%%%%%%
% end of the proof of proposition
%%%%%%%%%%%%%%%%%%%%%%%%%%%%%%%%%%%%%%%%%%%%%%%%%%%%%%%%%%%%%%%%%%%%
%
%
%
%
%
%
%
%
%%%%%%%%%%%%%%%%%%%%%%%%%%%%%%%%%%%%%%%%%%%%%%%%%%%%%%%%%%%%%%%%%%%%%%
%%%%%%%%%%%%%%%%%%%%%%%%%%%%%%%%%%%%%%%%%%%%%%%%%%%%%%%%%%%%%%%%%%%%%%
\subsection{The Crank-Nicolson-type finite difference method \text{\rm
CNFD}}
%%%%%%%%%%%%%%%%%%%%%%%%%%%%%%%%%%%%%%%%%%%%%%%%%%%%%%%%%%%%%%%%%%%%%%
%%%%%%%%%%%%%%%%%%%%%%%%%%%%%%%%%%%%%%%%%%%%%%%%%%%%%%%%%%%%%%%%%%%%%%
%
%%%%%%%%%%%%%%%%%%%%%%%%%%%%%%%%%%%%%%%%%%%%%%%%%%%%%%%
%
\subsubsection{Formulation of the method \text{\rm CNFD}}\label{TheMethod}
%
%%%%%%%%%%%%%%%%%%%%%%%%%%%%%%%%%%%%%%%%%%%%%%%%%%%%%%%
%
%
%
For $m=0,\cdots,N$, the method CNFD constructs an approximation
$U^m$ of $u^m$ following the steps below:
\par
{\tt Step A1}: Define $U^0\in X_h$ by
\begin{equation}\label{FD1}
U^0:=P_hu_0.
\end{equation}
\par
{\tt Step A2}: For $n=1,\dots,N$, find $U^n\in X_h$ such that
\begin{equation}\label{FD21}
\tfrac{U^n-U^{n-1}}{k}-\delta(r^{n-\frac{1}{2}})
\,\,\omega\otimes\,\partial_h\left(\tfrac{U^n+U^{n-1}}{2}\right)
=\cunit\,\,\xi(r^{n-\frac{1}{2}})
\,\,T_h(r^{n-\frac{1}{2}})\left(\tfrac{U^n+U^{n-1}}{2}\right)
+\cunit\,\tfrac{\lambda}{q}\,\left(\tfrac{U^n+U^{n-1}}{2}\right),
\end{equation}
where $\delta(r):=\frac{{\dot s}(r)}{s(r)}$ for $r\in[0,R]$, and
$\xi(r):=\tfrac{\lambda\,s^2(r)}{\alpha^2\,q^2}$ for $r\in[0,R]$.
%
%%%%%%%%%%%%%%%%%%%%%%%%%%%%%%%%%%%%%%%%%%%%%%%%%%%%%%%%%%%%%%%%%%%%%%
%%%%%%%%%%%%%%%%%%%%%%%%%%%%%%%%%%%%%%%%%%%%%%%%%%%%%%%%%%%%%%%%%%%%%%
%

%
%%%%%%%%%%%%%%%%%%%%%%%%%%%%%%%%%%%%%%%%%%%%%%%%%%%%%%%%%%%%%%%%%%%%%%
%%%%%%%%%%%%%%%%%%%%%%%%%%%%%%%%%%%%%%%%%%%%%%%%%%%%%%%%%%%%%%%%%%%%%%
%
%
%
%%%%%%%%%%%%%%%%%%%%%%%%%%%%%%%%%%%%%%%%%%%%%%%%%%%%%%%%%%%%%%%%%%%%%%
%%%%%%%%%%%%%%%%%%%%%%%%%%%%%%%%%%%%%%%%%%%%%%%%%%%%%%%%%%%%%%%%%%%%%%
\subsubsection{Consistency of \text{\rm CNFD}}
%%%%%%%%%%%%%%%%%%%%%%%%%%%%%%%%%%%%%%%%%%%%%%%%%%%%%%%%%%%%%%%%%%%%%%
%%%%%%%%%%%%%%%%%%%%%%%%%%%%%%%%%%%%%%%%%%%%%%%%%%%%%%%%%%%%%%%%%%%%%%
%
%
%
%
%
%%%%%%%%%%%%%%%%%%%%%%%%%%%%%%%%%%%%%%%%%%%%%%%%%%%%%%%%%%%%%%%%%%%%
% lemma %%%%%%%%%%%%%%%%%%%%%%%%%%%%%%%%%%%%%%%%%%%%%%%%%%%%%%%%%%%%
%%%%%%%%%%%%%%%%%%%%%%%%%%%%%%%%%%%%%%%%%%%%%%%%%%%%%%%%%%%%%%%%%%%%
%
\begin{lemma}\label{ld.4.5}
We assume that \eqref{DCE_40} or \eqref{DCE_60} hold. Also, for
$n=1,\dots,N$, we define $\eta^{n}\in X_h$ by
\begin{equation}\label{CC_equation_1}
\begin{split}
\tfrac{u^n-u^{n-1}}{k}-\delta(r^{n-\frac{1}{2}})\,\,\omega\otimes\partial_h\left(
\tfrac{u^n+u^{n-1}}{2}\right)=&\,\cunit\,\,\xi(r^{n-\frac{1}{2}})
\,T_h(r^{n-\frac{1}{2}})\left(\tfrac{u^n+u^{n-1}}{2}\right)\\
%%%
&\quad+\cunit\,\tfrac{\lambda}{q}\,\left(\tfrac{u^n+u^{n-1}}{2}\right)
+\eta^{n}.\\
\end{split}
\end{equation}
Then, there exists a constant $C>0$ depending on the data of the
problem \eqref{NewProblem2} which is such that
\begin{equation}\label{CCError_1}
\max_{1\leq{m}\leq{\ssy N}}|\eta^m|_{\infty,h}\leq\,C\,\left[\,
k^2\,{\mathcal B}_{1}(u)+h^2\,{\mathcal B}_{2}(u)\,\right]\\
\end{equation}
and
\begin{equation}\label{CCError_700}
\begin{split}
\max_{2\leq{n}\leq{\ssy
N}}|\eta^{n}-\eta^{n-1}|_{\infty,h}\leq&\,C\,\Big[\,k^3\,\Big(\,
|\partial_r^4u|_{\ssy \infty,\Omega}+|\partial_r^3u|_{\ssy
\infty,\Omega} +|\partial_r^2\partial_yu|_{\ssy
\infty,\Omega}+|\partial_r^3\partial_yu|_{\ssy
\infty,\Omega}\,\Big)\\
%%%
&\quad\quad+h^3\,\left(\,|\partial_y^4u|_{\ssy \infty,\Omega}+
\max_{[0,R]}|u|_{3,\infty,{\ssy I}}\,\right)\\
%%%
&\hskip-2.0truecm+k\,h^2\,\Big(\, |\partial_y^3u|_{\ssy
\infty,\Omega}+|\partial_r\partial_y^3u|_{\ssy \infty,\Omega}
+\max_{[0,R]}|u|_{2,\infty,{\ssy I}}
+\max_{[0,R]}|u_r|_{2,\infty,{\ssy I}}
\,\Big)\,\Big].\\
\end{split}
\end{equation}
where
\begin{equation*}
{\mathcal B}_{1}(u):=|u_{rrr}|_{\ssy
\infty,\Omega}+|u_{yrr}|_{\ssy \infty,\Omega}+|u_{rr}|_{\ssy
\infty,\Omega},\quad\quad\quad {\mathcal
B}_{2}(u):=\,\max_{[0,R]}|u|_{3,\infty,{\ssy I}}.
\end{equation*}
\end{lemma}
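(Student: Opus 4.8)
The plan is to establish the two consistency estimates \eqref{CCError_1} and \eqref{CCError_700} by Taylor-expanding the Crank--Nicolson scheme around the midpoint $r^{n-\frac12}$ and controlling the discrepancy between the continuous operator $T(r)$ and its discrete analogue $T_h(r)$ via the approximation results \eqref{DCoe_10} and \eqref{DCoe_1000} of Proposition~\ref{LHMMA_2}. I would begin by inserting the exact solution $u$ into the scheme, that is, by comparing \eqref{CC_equation_1} with the continuous p.d.e. of \eqref{NewProblem3} evaluated at $r^{n-\frac12}$. The truncation error $\eta^n$ then splits naturally into three contributions: (i) the time-stepping error from replacing $u_r(r^{n-\frac12},\cdot)$ by the centered difference $\tfrac{u^n-u^{n-1}}{k}$ and from the Crank--Nicolson average $\tfrac{u^n+u^{n-1}}{2}$, which by Taylor's theorem is $O(k^2)$ with constants involving $u_{rrr}$, $u_{rr}$, and $u_{yrr}$ (these are exactly the quantities in ${\mathcal B}_1(u)$); (ii) the spatial error from the centered difference $\partial_h$ applied to $\omega\otimes(\cdot)$, again $O(h^2)$; and (iii) the operator-approximation error from using $T_h(r^{n-\frac12})P_h$ in place of $P_h T(r^{n-\frac12})$, which \eqref{DCoe_10} bounds by $C h^2 |u|_{3,\infty,I}$. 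Collecting these yields \eqref{CCError_1}, with ${\mathcal B}_2(u)=\max_{[0,R]}|u|_{3,\infty,I}$ coming from the third contribution and the factor $h^2|u|_{3,\infty,I}$ it produces.

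For the second, stronger estimate \eqref{CCError_700}, the idea is to bound the \emph{difference} $\eta^n-\eta^{n-1}$ of consecutive truncation errors, which requires one extra order of smoothness and tracks the mixed $k$--$h$ cross terms. I would write $\eta^n-\eta^{n-1}$ as the sum of the differences of each of the three contributions above. The time-stepping and pure-spatial pieces, when differenced in $n$ (equivalently, across a shift by $k$ in $r$), gain either an extra power of $k$ (giving the $k^3$ terms, with constants $|\partial_r^4u|$, $|\partial_r^3u|$, $|\partial_r^2\partial_y u|$, $|\partial_r^3\partial_y u|$) or an extra power of $h$ combined with a $k$ from the difference (giving the $k h^2$ terms with $|\partial_y^3u|$ and $|\partial_r\partial_y^3u|$). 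The crucial operator-difference piece is handled by \eqref{DCoe_1000}: applying it with $\phi=u(r^{n-\frac12},\cdot)$ (and its $r$-shifted counterpart) produces precisely the $h^3|u|_{3,\infty,I}$ and $k h^2|u|_{2,\infty,I}$, $kh^2|u_r|_{2,\infty,I}$ contributions, since the two arguments $r^{n-\frac12}$ and $r^{n-\frac32}$ differ by $k$.

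The main obstacle, as the authors themselves emphasize in the Introduction, is the operator-approximation error near the endpoints: because the p.d.e. is effectively third order in $y$ (through the term $\delta(r)\,y\,u_y$), a naive count would leave an $O(1)$ truncation error at the nodes adjacent to $y=0$ and $y=1$. The resolution is that one must \emph{not} estimate the raw residual pointwise, but rather measure it through $T_h$ in the $\|\cdot\|_{1,h}$ norm, exploiting the smoothing in \eqref{DCE_42} and the carefully-constructed second-order bounds \eqref{DCoe_10}--\eqref{DCoe_1000}; this is what absorbs the problematic boundary terms and recovers full second-order consistency. Thus the real work is bookkeeping: assembling the Taylor remainders so that each term lands in the correct $k^a h^b$ slot, and invoking \eqref{DCoe_10} for \eqref{CCError_1} and \eqref{DCoe_1000} for \eqref{CCError_700} at the right arguments. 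The remaining estimates are routine applications of Taylor's theorem with the discrete $|\cdot|_{\infty,h}$ norm together with the boundedness of $T_h$ established in Section~\ref{sd4.2}.
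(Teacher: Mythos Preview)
Your approach is essentially that of the paper: decompose $\eta^n$ via Taylor expansion about $r^{n-\frac12}$ into time-stepping, spatial-differencing, and operator-approximation pieces, then invoke \eqref{DCoe_10} for \eqref{CCError_1} and \eqref{DCoe_1000} (together with \eqref{DCoe_10} applied to differenced arguments) for \eqref{CCError_700}. The bookkeeping you describe matches the paper's proof.

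One clarification, however: your final paragraph misstates where the ``$O(1)$ at the boundary'' issue sits and how it is resolved. The residual $\eta^n$ \emph{is} estimated pointwise, i.e.\ in $|\cdot|_{\infty,h}$, exactly as the lemma asserts; there is no need to pass the whole residual through $T_h$. The potential boundary trouble has already been avoided by formulating the scheme (and hence the consistency equation \eqref{CC_equation_1}) with the \emph{inverse} operator $T_h$ rather than with $\Lambda_h$. In this form the only non-Taylor piece is the discrepancy $T_h(r)P_h\phi-P_hT(r)\phi$, which \eqref{DCoe_10} bounds in $\|\cdot\|_{1,h}$; one then uses \eqref{Max_H1_Dominus} to convert that single term to $|\cdot|_{\infty,h}$. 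So the ``smoothing'' you refer to is localized to the operator-difference term, not applied to $\eta^n$ globally. With this adjustment your outline is correct.
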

%
%%%%%%%%%%%%%%%%%%%%%%%%%%%%%%%%%%%%%%%%%%%%%%%%%%%%%%%%%%%%%%%%%%%%
% end of lemma %%%%%%%%%%%%%%%%%%%%%%%%%%%%%%%%%%%%%%%%%%%%%%%%%%%%%
%%%%%%%%%%%%%%%%%%%%%%%%%%%%%%%%%%%%%%%%%%%%%%%%%%%%%%%%%%%%%%%%%%%%
%
%
%
%
%
%
%
%
%
%
%%%%%%%%%%%%%%%%%%%%%%%%%%%%%%%%%%%%%%%%%%%%%%%%%%%%%%%%%%%%%%%%%%%%
% begin of the proof of the lemma
%%%%%%%%%%%%%%%%%%%%%%%%%%%%%%%%%%%%%%%%%%%%%%%%%%%%%%%%%%%%%%%%%%%%
%
%koukos
\begin{proof}
Let $n\in\{1,\dots,N\}$. Using Taylor's formula we conclude that
\begin{equation}\label{DNT_Spread_1A}
\begin{split}
\eta_j^{n}=&\,\tfrac{k^2}{24}\,u_{rrr}(\tau^n_j,y_j)
-\delta(r^{n-\frac{1}{2}})
\,y_j\,\left[\tfrac{k^2}{8}\,u_{yrr}({\widetilde\tau}^n_j,y_j)
+\tfrac{h^2}{6}\,u_{yyy}(r^n,z_j^n)
+\tfrac{h^2}{6}\,u_{yyy}(r^{n-1},{\widetilde z}^n_j)\right]\\
%%%
&-\cunit\,\,\xi(r^{n-\frac{1}{2}})
\,\left[T_h(r^{n-\frac{1}{2}})P_h\left(\tfrac{u(r^{n-1},\cdot)+u(r^n,\cdot)}{2}\right)
-P_hT(r^{n-\frac{1}{2}})\left(\tfrac{u(r^n,\cdot)+u(r^{n-1},\cdot)}{2}\right)\right]_j\\
%%%
&-\cunit\,\,\xi(r^{n-\frac{1}{2}})
\,\left[P_hT(r^{n-\frac{1}{2}})\left(\tfrac{u(r^n,\cdot)+u(r^{n-1},\cdot)}{2}
-u(r^{n-\frac{1}{2}},\cdot)\right) \right]_j\\
%%%
&+\cunit\,\tfrac{\lambda}{q}\,\tfrac{k^2}{8}\,u_{rr}(t^n_j,y_j),
\quad j=1,\dots,J-1,
\end{split}
\end{equation}
where $t^n_j$, $\tau^n_j$, ${\widetilde\tau}^n_j\in(r^{n-1},r^n)$
and ${\widetilde z}^n_j\in(y_{j-1},y_{j+1})$.
\par
Now, combine \eqref{DNT_Spread_1A}, \eqref{Max_H1_Dominus},
\eqref{DCoe_10}, \eqref{Ringer_1} and \eqref{Cagaroo_1} to obtain
\begin{equation}\label{DNT_Spread_1B}
\begin{split}
|\eta^{n}|_{\infty,h}\leq&\,C
\,\Big[k^2\,\big(\,|u_{rrr}|_{\infty,\ssy\Omega}+|u_{rr}|_{\infty,\ssy\Omega}
+|u_{yrr}|_{\infty,\ssy\Omega}\,\big)
+h^2\,|u_{yyy}|_{\infty,\ssy\Omega}\Big]
+C\,h^2\,\max_{[0,R]}|u|_{2,\infty,\ssy I}\\
%%%
&\quad+C\,\left|\tfrac{u(r^n,\cdot)+u(r^{n-1},\cdot)}{2}
-u(r^{n-\frac{1}{2}},\cdot)\right|_{\infty,{\ssy I}}\\
%%%
\leq&\,C\,\left[k^2\,\big(\,|u_{rrr}|_{\infty,\ssy\Omega}
+|u_{rr}|_{\infty,\ssy\Omega}
+|u_{yrr}|_{\infty,\ssy\Omega}\,\big)
+h^2\,\max_{[0,R]}|u|_{3,\infty,{\ssy I}}\right],
\end{split}
\end{equation}
and thus arrive at \eqref{CCError_1}.
\par
Since it holds that
\begin{equation*}
\begin{split}
\eta_j^{n}-\eta_j^{n-1}=&\,\tfrac{k^2}{24}\,\left[\,
u_{rrr}(\tau^n_j,y_j)-u_{rrr}(\tau^{n-1}_j,y_j)\right]
+\cunit\,\tfrac{\lambda}{q}\,\tfrac{k^2}{8}\,\left[\,
u_{rr}(t^n_j,y_j)-u_{rr}(t^{n-1}_j,y_j)\right]\\
%%%
&\,-\left[\delta(r^{n-\frac{1}{2}})-\delta(r^{n-\frac{3}{2}})
\right]\,y_j\,\left[\tfrac{k^2}{8}
\,u_{yrr}({\widetilde\tau}^n_j,y_j)+\tfrac{h^2}{6}\,u_{yyy}(r^n,z_j^n)
+\tfrac{h^2}{6}\,u_{yyy}(r^{n-1},{\widetilde z}^n_j)\right]\\
%%%
&-\delta(r^{n-\frac{3}{2}})\,y_j\,\Bigg\{\left[\tfrac{k^2}{8}
\,u_{yrr}({\widetilde\tau}^n_j,y_j)+\tfrac{h^2}{6}\,u_{yyy}(r^n,z_j^n)
+\tfrac{h^2}{6}\,u_{yyy}(r^{n-1},{\widetilde z}^n_j)\right]\\
%%%
&-\left[\tfrac{k^2}{8}
\,u_{yrr}({\widetilde\tau}^{n-1}_{j},y_j)+\tfrac{h^2}{6}
\,u_{yyy}(r^{n-1},z^{n-1}_{j})
+\tfrac{h^2}{6}\,u_{yyy}(r^{n-2},{\widetilde
z}_{n-1,j})\right]\Bigg\}\\
%%%
&-\cunit\,\left[\xi(r^{n-\frac{1}{2}})-\xi(r^{n-\frac{3}{2}})
\right]\,\left[T_h(r^{n-\frac{1}{2}})
P_h\left(\tfrac{u(r^n,\cdot)+u(r^{n-1},\cdot)}{2}\right)
-P_hT(r^{n-\frac{1}{2}})
\left(\tfrac{u(r^{n},\cdot)+u(r^{n-1},\cdot)}{2}\right)\right]_j\\
%%%
&-\cunit\,\xi(r^{n-\frac{3}{2}})
\,\Bigg\{\left[T_h(r^{n-\frac{1}{2}})P_h
\left(\tfrac{u(r^{n},\cdot)+u(r^{n-1},\cdot)}{2}\right)
-P_hT(r^{n-\frac{1}{2}})\left(\tfrac{u(r^{n},\cdot)
+u(r^{n-1},\cdot)}{2}\right)\right]_j\\
%%%
&\hskip1.0truecm-\left[T_h(r^{n-\frac{3}{2}})
P_h\left(\tfrac{u(r^{n-1},\cdot)+u(r^{n},\cdot)}{2}\right)
-P_hT(r^{n-\frac{3}{2}})
\left(\tfrac{u(r^{n-1},\cdot)+u(r^{n},\cdot)}{2}\right)\right]_j\Bigg\}\\
%%%
&-\cunit\,\xi(r^{n-\frac{3}{2}})
\,\Bigg\{\left[T_h(r^{n-\frac{3}{2}})
P_h\left(\tfrac{u(r^{n-1},\cdot)+u(r^{n},\cdot)}{2}\right)
-P_hT(r^{n-\frac{3}{2}})
\left(\tfrac{u(r^{n-1},\cdot)+u(r^{n},\cdot)}{2}\right)\right]_j\\
%%%
&\hskip1.0truecm-\left[T_h(r^{n-\frac{3}{2}})P_h
\left(\tfrac{u(r^{n-1},\cdot)+u(r^{n-2},\cdot)}{2}\right)
-P_hT(r^{n-\frac{3}{2}})\left(\tfrac{u(r^{n-1},\cdot)+u(r^{n-2},\cdot)}{2}
\right)\right]_j\Bigg\},\\
\end{split}
\end{equation*}
for $j=1,\dots,J-1$ and $n=2,\dots,N$, \eqref{CCError_700} follows
easily by the mean value theorem, \eqref{DCoe_10} and
\eqref{DCoe_1000}.
\end{proof}
%
%
%%%%%%%%%%%%%%%%%%%%%%%%%%%%%%%%%%%%%%%%%%%%%%%%%%%%%%%%%%%%%%%%%%%%
% end of the proof of the lemma
%%%%%%%%%%%%%%%%%%%%%%%%%%%%%%%%%%%%%%%%%%%%%%%%%%%%%%%%%%%%%%%%%%%%
%
%
%
%
%
%
%
%
%
%
%
%
%
%
%
%%%%%%%%%%%%%%%%%%%%%%%%%%%%%%%%%%%%%%%%%%%%%%%%%%%%%%%%%%%%%%%%%%%%%%
%%%%%%%%%%%%%%%%%%%%%%%%%%%%%%%%%%%%%%%%%%%%%%%%%%%%%%%%%%%%%%%%%%%%%%
\subsubsection{Convergence of \text{\rm CNFD}.}
%%%%%%%%%%%%%%%%%%%%%%%%%%%%%%%%%%%%%%%%%%%%%%%%%%%%%%%%%%%%%%%%%%%%%%
%%%%%%%%%%%%%%%%%%%%%%%%%%%%%%%%%%%%%%%%%%%%%%%%%%%%%%%%%%%%%%%%%%%%%%
%
%
%
%
%
%
%%%%%%%%%%%%%%%%%%%%%%%%%%%%%%%%%%%%%%%%%%%%%%%%%%%%%%%%%%%%%%%%%%%%
% theorem
%%%%%%%%%%%%%%%%%%%%%%%%%%%%%%%%%%%%%%%%%%%%%%%%%%%%%%%%%%%%%%%%%%%%
%
%
\begin{thm}\label{L2Convergence}
Let $(U^m)_{m=0}^{\ssy J}\subset X_h$ be the finite difference
approximation to the solution of \eqref{NewProblem2} defined as in
Section~\ref{TheMethod}. Then, there exist constants $C_1>0$,
$C_2>0$ and $C_3\ge0$ independent of $k$ and $h$ such that: if
$C_3\,k\leq\, 3$, then
\begin{equation}\label{MaxL2Bound}
\max_{0\leq{n}\leq{\ssy
N}}\|U^n-u^n\|_{0,h}\leq\,C_1\,\max_{1\leq{n}\leq{\ssy
N}}\|\eta^n\|_{0,h}
\end{equation}
and
\begin{equation}\label{MaxMaxBound}
\max_{2\leq{n}\leq{\ssy
N}}\left|\omega\otimes\left(\tfrac{u^n+u^{n-1}}{2}-\tfrac{U^n+U^{n-1}}{2}
\right)\right|_{\infty,h}\leq\,C_2\,\Big[
k^{-1}\,\max_{2\leq{n}\leq{\ssy N}}\|\eta^{n}-\eta^{n-1}\|_{0,h}
+\max_{1\leq{n}\leq{\ssy N}}\|\eta^n\|_{0,h}\Big].
\end{equation}
\end{thm}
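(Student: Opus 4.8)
The plan is to run a discrete energy (stability) argument for the error $e^n:=U^n-u^n$, treating the truncation error $\eta^n$ of \eqref{CC_equation_1} as given data and recalling that $e^0=0$ by \eqref{FD1}. Subtracting \eqref{CC_equation_1} from \eqref{FD21} gives, with $\bar e^n:=\tfrac{e^n+e^{n-1}}{2}$, the error equation
\begin{equation*}
\tfrac{e^n-e^{n-1}}{k}-\delta(r^{n-\frac12})\,\omega\otimes\partial_h\bar e^n=\cunit\,\xi(r^{n-\frac12})\,T_h(r^{n-\frac12})\bar e^n+\cunit\,\tfrac{\lambda}{q}\,\bar e^n-\eta^n.
\end{equation*}
For \eqref{MaxL2Bound} I would take the discrete inner product $(\cdot,\cdot)_{0,h}$ of this identity with $\bar e^n$ and take real parts. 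The time-difference term telescopes into $\tfrac{1}{2k}\big(\|e^n\|_{0,h}^2-\|e^{n-1}\|_{0,h}^2\big)$; since $\delta$ is real, the advection term is, by \eqref{ErrEstim3} and \eqref{Iden_Bound_1} together with Cauchy--Schwarz, bounded by $C\|\bar e^n\|_{0,h}^2$; the operator term is bounded by $C\|\bar e^n\|_{0,h}^2$ using \eqref{DCE_42} and Cauchy--Schwarz; the $\tfrac{\lambda}{q}$ term contributes $-\mathrm{Im}(\tfrac{\lambda}{q})\|\bar e^n\|_{0,h}^2$; and the forcing yields $\|\eta^n\|_{0,h}\,\|\bar e^n\|_{0,h}$. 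Using $\|\bar e^n\|_{0,h}^2\le\tfrac12(\|e^n\|_{0,h}^2+\|e^{n-1}\|_{0,h}^2)$ turns this into an implicit discrete Gr\"onwall inequality; the hypothesis $C_3k\le3$ is exactly what allows the $\|e^n\|_{0,h}^2$ term at the new level to be absorbed, after which summation (using $e^0=0$) and $Nk=R$ give \eqref{MaxL2Bound}.

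For \eqref{MaxMaxBound} the quantity to estimate is $\omega\otimes\bar e^n$ (note that $\tfrac{u^n+u^{n-1}}{2}-\tfrac{U^n+U^{n-1}}{2}=-\bar e^n$) in the discrete maximum norm, for which the natural tool is \eqref{DSobolev_Bound}. Since $\|\omega\otimes\bar e^n\|_{0,h}\le\|\bar e^n\|_{0,h}$ is already controlled by \eqref{MaxL2Bound}, and a discrete product rule gives $\|\partial_h(\omega\otimes\bar e^n)\|_{0,h}\le\|\omega\otimes\partial_h\bar e^n\|_{0,h}+C\|\bar e^n\|_{0,h}$, it suffices to bound $\|\omega\otimes\partial_h\bar e^n\|_{0,h}$. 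Solving the error equation for $\delta(r^{n-\frac12})\,\omega\otimes\partial_h\bar e^n$ and dividing by $\delta$ (which is bounded below by a positive constant for a downsloping bottom) reduces everything to controlling the difference quotient $g^n:=\tfrac1k(e^n-e^{n-1})$ in $\|\cdot\|_{0,h}$.

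To bound $g^n$ I would difference the error equation in $n$, obtaining an equation for $w^n:=e^n-e^{n-1}$ of the same structure, with midpoint $\tfrac{w^n+w^{n-1}}{2}$, forcing $-(\eta^n-\eta^{n-1})$, and commutator terms arising from the range dependence of $\delta$, $\xi$ and $T_h$. Applying the energy argument of the first part to this differenced equation, the commutators are the heart of the matter: the coefficient differences are $O(k)$ by the mean value theorem, the operator commutator is absorbed using the Lipschitz-in-$r$ bound $\|T_h(r)-T_h(\tau)\|\le C|r-\tau|$ (which follows from \eqref{DCE_42} and the smoothness of the coefficients of $\Lambda_h$), and the potentially dangerous term $(\delta(r^{n-\frac12})-\delta(r^{n-\frac32}))\,\omega\otimes\partial_h\bar e^{n-1}$ is tamed by reusing the single-step error equation at level $n-1$ to replace $\omega\otimes\partial_h\bar e^{n-1}$ by $\tfrac1k w^{n-1}$ plus bounded terms; here the surviving weight $\omega$ is exactly what legitimises this substitution, and the $O(k)$ coefficient cancels the $\tfrac1k$. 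The resulting discrete Gr\"onwall inequality then yields $\max_n\|w^n\|_{0,h}\le Ck\big[k^{-1}\max_n\|\eta^n-\eta^{n-1}\|_{0,h}+\max_n\|\eta^n\|_{0,h}\big]$, where the initial difference quotient is handled separately by noting that the first-step equation with $e^0=0$ gives $\|e^1\|_{0,h}\le Ck\|\eta^1\|_{0,h}$, and the $\|\bar e^{n-1}\|_{0,h}$ contributions are controlled by \eqref{MaxL2Bound}.

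Dividing by $k$ bounds $\max_n\|g^n\|_{0,h}$ by the right-hand side of \eqref{MaxMaxBound}, and feeding this back through the error equation and \eqref{DSobolev_Bound} completes the estimate. I expect the second bound to be the main obstacle: the difficulty is to obtain the sharp $k^{-1}$ scaling in front of $\max_n\|\eta^n-\eta^{n-1}\|_{0,h}$ while avoiding any $k/h$ loss in the commutator $(\delta(r^{n-\frac12})-\delta(r^{n-\frac32}))\,\omega\otimes\partial_h\bar e^{n-1}$, which is precisely where the weight $\omega$ and the single-step equation must be exploited together.
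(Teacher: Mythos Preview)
Your proposal is correct and follows essentially the same route as the paper: an energy estimate on $e^n$ via the inner product with $e^n+e^{n-1}$ (using \eqref{ErrEstim3}, \eqref{Iden_Bound_1}, \eqref{DCE_42}) and discrete Gr\"onwall for \eqref{MaxL2Bound}; then differencing in $n$, running the same energy argument on $\nu^n=e^n-e^{n-1}$, handling the $T_h$-commutator by its Lipschitz-in-$r$ bound and the $(\delta^{n-\frac12}-\delta^{n-\frac32})\,\omega\otimes\partial_h\bar e^{n-1}$ term by reusing the single-step error equation, and finally feeding the $\nu^n$ bound back through that equation and \eqref{DSobolev_Bound} with the product rule $\partial_h(\omega\otimes v)=\omega\otimes\partial_hv+I_hv$. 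The paper's proof matches your outline step for step, including the implicit division by $\delta(r^{n-\frac12})>0$ and the treatment of $\|e^1\|_{0,h}$ via the first-step equation.
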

%
%
%
%%%%%%%%%%%%%%%%%%%%%%%%%%%%%%%%%%%%%%%%%%%%%%%%%%%%%%%%%%%%%%%%%%%%
% end of theorem
%%%%%%%%%%%%%%%%%%%%%%%%%%%%%%%%%%%%%%%%%%%%%%%%%%%%%%%%%%%%%%%%%%%%
%
%
%
%
%
%
%
%%%%%%%%%%%%%%%%%%%%%%%%%%%%%%%%%%%%%%%%%%%%%%%%%%%%%%%%%%%%%%%%%%%%
% begin of the proof of theorem
%%%%%%%%%%%%%%%%%%%%%%%%%%%%%%%%%%%%%%%%%%%%%%%%%%%%%%%%%%%%%%%%%%%%
%
%
\begin{proof}
Let $e^n:=u^n-U^m$ for $n=0,\dots,N$, and observe that due to
\eqref{FD1} there holds that $e^0=0$. Next, subtract \eqref{FD21}
from \eqref{CC_equation_1} to obtain
\begin{equation}\label{EERREquation1}
\begin{split}
\tfrac{e^n-e^{n-1}}{k}-\delta(r^{n-\frac{1}{2}})
\,\omega\otimes\partial_h\left(
\tfrac{e^n+e^{n-1}}{2}\right)=&\,\cunit\,\xi(r^{n-\frac{1}{2}})
\,\,T_h(r^{n-\frac{1}{2}})\left(\tfrac{e^n+e^{n-1}}{2}\right)\\
%%%
&\quad+\cunit\,\tfrac{\lambda}{q}\,\left(\tfrac{e^n+e^{n-1}}{2}\right)
+\eta^{n},\quad n=1,\dots,N.
\end{split}
\end{equation}
%
%
%
%
%
%
%
%\par
%Since $e^0=0$, \eqref{EERREquation1} easily yields that
%
%
%\begin{equation}\label{AstaNaPane1}
%e^1=k\,\tfrac{{\dot
%s}(k/2)}{2\,s(k/2)}\,\omega\otimes\partial_he^1
%+\cunit\,k\,\tfrac{\lambda\,s^2(k/2)}{2\alpha^2\,q^2}
%\,\,T_h(k/2)e^1+k\,\eta^0.
%\end{equation}
%
%Next, take the $(\cdot,\cdot)_{0,h}-$inner product of both sides
%of \eqref{AstaNaPane1} by $e^1$ and then real parts to obtain
%
%
%\begin{equation}\label{AstaNaPane2}
%\|e^1\|_{0,h}^2=k\,\tfrac{{\dot
%s}(k/2)}{2\,s(k/2)}\,\Ree(\omega\otimes\partial_he^1,e^1)_{0,h}
%-k\,\Imm\left[\,\tfrac{\lambda\,s^2(k/2)}{2\alpha^2\,q^2}
%\,\,(T_h(k/2)e^1,e^1)_{0,h}\,\right]+k\,\Ree(\eta^0,e^1)_{0,h}.
%\end{equation}
%
%
%Combining \eqref{AstaNaPane2}, \eqref{ErrEstim3},
%\eqref{Iden_Bound_1} and \eqref{DCE_42} or \eqref{DCE_62} we
%obtain
%
%\begin{equation}\label{AstaNaPane3}
%\|e^1\|_{0,h}\leq\,{\widetilde
%c}\,k\,\|e^1\|_{0,h}+k\|\eta^0\|_{0,h}.
%\end{equation}
%
%Thus, assuming that $k\,{\widetilde c}\leq\tfrac{1}{3}$, from
%\eqref{AstaNaPane3} follows that
%
%\begin{equation}\label{EErr_Estimate_1}
%\|e^1\|_{0,h}\leq\,C\,k\,\|\eta^0\|_{0,h}.
%\end{equation}
%
%
%
%
%
%
Now, take the $(\cdot,\cdot)_{0,h}-$inner product of both sides of
\eqref{EERREquation1} with $e^n+e^{n-1}$, and then real parts to
get
\begin{equation}\label{EErrEstim2}
\begin{split}
\|e^n\|_{0,h}^2-\|e^{n-1}\|_{0,h}^2
=&\,\tfrac{k}{2}\,\delta(r^{n-\frac{1}{2}})\,\text{\rm
Re}\left(\omega\otimes\partial_h(e^n+e^{n-1}),e^n+e^{n-1}\right)_{0,h}\\
%%%
&\,-\tfrac{k}{2}\,\text{\rm Im}\left[\,\xi(r^{n-\frac{1}{2}})
\,\left(T_h(r^{n-\frac{1}{2}})(e^n+e^{n-1}),
e^n+e^{n-1}\right)_{0,h}\,\right]\\
%%%
&\,-\tfrac{k}{2}\,\text{\rm Im}\left[\,\tfrac{\lambda}{q}
\,\left(e^n+e^{n-1},e^n+e^{n-1}\right)_{0,h}\,\right]\\
%%%
&\,+k\,\text{\rm Re}\left(\eta^{n},
e^n+e^{n-1}\right)_{0,h},\quad n=1,\dots,N.\\
\end{split}
\end{equation}
Combining \eqref{EErrEstim2}, \eqref{ErrEstim3},
\eqref{Iden_Bound_1} and \eqref{DCE_42} we obtain
\begin{equation}\label{EErrEstim4}
\|e^n\|_{0,h}-\|e^{n-1}\|_{0,h}\leq\,k\,c_{\star}
\,\left(\,\,\|e^n\|_{0,h}+\|e^{n-1}\|_{0,h}\,\,\right)
+k\,\|\eta^n\|_{0,h},\quad n=1,\dots,N.
\end{equation}
Assuming that $k\,c_{\star}\leq\tfrac{1}{3}$, from
\eqref{EErrEstim4} we conclude that
\begin{equation*}
\|e^n\|_{0,h}\leq\,\tfrac{1+c_{\star}\,k}{1-c_{\star}\,k}
\,\|e^{n-1}\|_{0,h}+\tfrac{k}{1-c_{\star}\,k}\,\|\eta^{n}\|_{0,h},\quad
n=1,\dots,N,
\end{equation*}
which yields
\begin{equation}\label{nearxos1}
\|e^{n}\|_{0,h}\leq\,e^{4c_*k}\,\|e^{n-1}\|_{0,h} +
3\,k\,\|\eta^{n}\|_{0,h},\quad n=1,\dots,N.
\end{equation}
Applying a standard discrete Gr{\"o}nwall argument on
\eqref{nearxos1} and using the fact that $\|e^0\|_{0,h}=0$, we
obtain
\begin{equation}\label{nearxos2}
\begin{split}
\max_{0\leq{n}\leq{\ssy
N}}\|e^{n}\|_{0,h}\leq&\,C\,\big(\,\|e^0\|_{0,h}+\max_{1\leq{n}\leq{\ssy
N}}\|\eta^n\|_{0,h}\,\big)\\
%%%
\leq&\,C\,\max_{1\leq{n}\leq{\ssy
N}}\|\eta^n\|_{0,h},\\
\end{split}
\end{equation}
which establishes the estimate \eqref{MaxL2Bound}.
\par
Let $\nu^m:=e^m-e^{m-1}$ for $m=1,\dots,N$. Then,
\eqref{EERREquation1} yields
\begin{equation}\label{BigGoal1}
\begin{split}
\tfrac{\nu^{n}-\nu^{n-1}}{k}-\delta(r^{n-\frac{1}{2}})
\,\omega\otimes\partial_h \left(\tfrac{\nu^n+\nu^{n-1}}{2}\right)
&=\,\left[\delta(r^{n-\frac{1}{2}})-\delta(r^{n-\frac{3}{2}})
\right]\,\omega\otimes\partial_h
\left(\tfrac{e^{n-1}+e^{n-2}}{2}\right)\\
%%%
&+\,\cunit\,\xi(r^{n-\frac{3}{2}}) \,T_h(r^{n-\frac{1}{2}})
\left(\tfrac{\nu^{n}+\nu^{n-1}}{2}\right)\\
%%%
&+\,\cunit\,\left[\,\xi(r^{n-\frac{1}{2}})
-\xi(r^{n-\frac{3}{2}})\,\right]
\,T_h(r^{n-\frac{1}{2}})\left(\tfrac{e^n+e^{n-1}}{2}\right)\\
%%%
&+\,\cunit\,\xi(r^{n-\frac{3}{2}})
\,\left[\,T_h(r^{n-\frac{1}{2}})\left(\tfrac{e^n+e^{n-1}}{2}\right)
-T_h(r^{n-\frac{3}{2}})\left(\tfrac{e^n+e^{n-1}}{2}\right)\,\right]\\
%%%
&+\,\cunit\,\tfrac{\lambda}{q}\,\left(\tfrac{\nu^n+\nu^{n-1}}{2}\right)\\
%%%
&+(\eta^{n}-\eta^{n-1}),\quad n=2,\dots,N.\\
\end{split}
\end{equation}
Now, take the $(\cdot,\cdot)_{0,h}-$inner product of both sides of
\eqref{BigGoal1} with $\nu^n+\nu^{n-1}$ and then real parts to get
\begin{equation}\label{BigGoal2}
\begin{split}
\|\nu^{n}\|_{0,h}^2-\|\nu^{n-1}\|_{0,h}^2\leq&\,C\,k\,\Big[\,
\|\nu^n+\nu^{n-1}\|_{0,h}+k\,\|e^n+e^{n-1}\|_{0,h}
+\|\eta^{n}-\eta^{n-1}\|_{0,h}\\
%%%
&\hskip1.0truecm+\left\|T_h(r^{n-\frac{1}{2}})(e^n+e^{n-1})
-T_h(r^{n-\frac{3}{2}})(e^n+e^{n-1})\right\|_{0,h}\\
%%%
&\hskip1.0truecm+k\,\left\|\omega\otimes\partial_h
\left(\tfrac{e^{n-1}+e^{n-2}}{2}\right)\right\|_{0,h}\,\Big]
\,\|\nu^n+\nu^{n-1}\|_{0,h},\quad n=2,\dots,N,\\
\end{split}
\end{equation}
after using \eqref{DCE_42}.
Now, \eqref{EERREquation1} along with \eqref{DCE_42}, yield
\begin{equation}\label{BigGoal3}
\left\|\omega\otimes\partial_h
\left(\tfrac{e^{n-1}+e^{n-2}}{2}\right)\right\|_{0,h}\leq\,C\,\Big[\,
\|\eta^{n-1}\|_{0,h}+\|e^{n-1}\|_{0,h}+\|e^{n-2}\|_{0,h}
+k^{-1}\,\,\|\nu^{n-1}\|_{0,h}\Big],\quad n=2,\dots,N.
\end{equation}
Thus, \eqref{BigGoal3} and \eqref{BigGoal2} yield
\begin{equation}\label{TomTom1}
\begin{split}
\|\nu^{n}\|_{0,h}-\|\nu^{n-1}\|_{0,h}\leq&\,C\,k\,\Big[\,
\|\nu^n\|_{0,h}+\|\nu^{n-1}\|_{0,h}+k\,\left(\|e^n\|_{0,h}+\|e^{n-1}\|_{0,h}
+\|e^{n-2}\|_{0,h}\right)\\
%%%
&\hskip1.0truecm +k\,\|\eta^{n-1}\|_{0,h}
+\|\eta^{n}-\eta^{n-1}\|_{0,h}\\
%%%
&\hskip1.0truecm+\left\|T_h(r^{n-\frac{1}{2}})(e^n+e^{n-1})
-T_h(r^{n-\frac{3}{2}})(e^n+e^{n-1})\right\|_{0,h}
\,\Big]\\
\end{split}
\end{equation}
for $n=2,\dots,N$.
Now, we observe that, for $v\in X_h$, we have
\begin{equation}\label{TomTom2}
\begin{split}
\Lambda_h(r^{n-\frac{1}{2}})\left(T_h(r^{n-\frac{1}{2}})v
-T_h(r^{n-\frac{3}{2}})v\right)=&\,
\Lambda_h(r^{n-\frac{3}{2}})T_h(r^{n-\frac{3}{2}})v
-\Lambda_h(r^{n-\frac{1}{2}})T_h(r^{n-\frac{3}{2}})v\\
%%%
%%%
=&\,V^n\otimes T_h(r^{n-\frac{3}{2}})v,\quad n=2,\dots,N,\\
\end{split}
\end{equation}
where $V^n\in X_h$ is given by
\begin{equation}\label{TomTom3}
V^n_j:=s^2(r^{n-\frac{3}{2}})\,
\tfrac{1+q\,\gamma(r^{n-\frac{3}{2}},y_j)}{\alpha^2\,q}
-s^2(r^{n-\frac{1}{2}})
\,\tfrac{1+q\,\gamma(r^{n-\frac{1}{2}},y_j)}{\alpha^2\,q},\quad
j=1,\dots,J-1.
\end{equation}
Using \eqref{TomTom2}, \eqref{DCE_42} and \eqref{TomTom3} we have
\begin{equation}\label{BigGoal4}
\begin{split}
\|T_h(r^{n-1})v-T_h(r^{n-2})v\|_{0,h}
\leq&\,C\,|V^n|_{\infty,h}\,\|v\|_{0,h}\\
%%%
\leq&\,C\,k\,\|v\|_{0,h},\quad\forall\,v\in X_h,
\quad n=2,\dots,N.\\
\end{split}
\end{equation}
Combining \eqref{TomTom1} and \eqref{BigGoal4} we obtain
\begin{equation}\label{BigGoal5}
\begin{split}
\|\nu^{n}\|_{0,h}-\|\nu^{n-1}\|_{0,h}\leq&\,C_{\star}\,k\,\Big[\,
\|\nu^n\|_{0,h}+\|\nu^{n-1}\|_{0,h}
+k\,\left(\|e^n\|_{0,h}+\|e^{n-1}\|_{0,h}
+\|e^{n-2}\|_{0,h}\right)\\
%%%
&\hskip1.0truecm+k\,\|\eta^{n-1}\|_{0,h}
+\|\eta^{n}-\eta^{n-1}\|_{0,h}\,\Big],
\quad n=2,\dots,N.\\
\end{split}
\end{equation}
Assuming that $k$ is enough small (i.e.
$3\,k\,\max\{c_{\star},C_{\star}\}\,\leq1$) and applying a
discrete Gr{\"o}nwall argument on \eqref{BigGoal5}, we conclude
that
\begin{equation*}
\begin{split}
\max_{1\leq{n}\leq{\ssy N}}\|\nu^n\|_{0,h}\leq\,C\,\Big[&
\max_{2\leq{n}\leq{\ssy N}}\|\eta^{n}-\eta^{n-1}\|_{0,h}
+k\,\max_{1\leq{n}\leq{\ssy N-1}}\|\eta^n\|_{0,h}\\
%%%
&+k\,\max_{0\leq{n}\leq{\ssy N}}\|e^n\|_{0,h}
+\|\nu^1\|_{0,h}\Big].\\
\end{split}
\end{equation*}
which, along with \eqref{MaxL2Bound} and \eqref{nearxos1}, yields
\begin{equation}\label{BigGoal6}
\begin{split}
\max_{1\leq{n}\leq{\ssy N}}\|\nu^n\|_{0,h}\leq&\,C\,\Big[
\max_{2\leq{n}\leq{\ssy N}}\|\eta^{n}-\eta^{n-1}\|_{0,h}
+k\,\max_{1\leq{n}\leq{\ssy N}}\|\eta^n\|_{0,h}
+\|e^1\|_{0,h}\Big]\\
%%%
\leq&\,C\,\Big[ \max_{2\leq{n}\leq{\ssy
N}}\|\eta^{n}-\eta^{n-1}\|_{0,h} +k\,\max_{1\leq{n}\leq{\ssy
N}}\|\eta^n\|_{0,h}\Big].\\
\end{split}
\end{equation}
\par
Now, from \eqref{EERREquation1} follows that
\begin{equation}\label{BigGoal10}
\|\omega\otimes\partial_h
(e^n+e^{n-1})\|_{0,h}\leq\,C\,\Big[\,k^{-1}\,\|\nu^{n}\|_{0,h} +
\|e^{n}\|_{0,h} +\|e^{n-1}\|_{0,h}
+\|\eta^{n}\|_{0,h}\,\Big],\quad n=1,\dots,N.
\end{equation}
Then, \eqref{BigGoal10} and \eqref{BigGoal6} yield that
\begin{equation}\label{BigGoal11}
\max_{1\leq{n}\leq{\ssy N}}\|\omega\otimes\partial_h
(e^n+e^{n-1})\|_{0,h}\leq\,C\,\Big[
k^{-1}\,\max_{2\leq{n}\leq{\ssy N}}\|\eta^{n}-\eta^{n-1}\|_{0,h}
+\max_{1\leq{n}\leq{\ssy N}}\|\eta^n\|_{0,h}\Big].
\end{equation}
Finally, \eqref{MaxMaxBound} follows from \eqref{BigGoal11} and
\eqref{DSobolev_Bound}, in view of the identity
$\partial_h(\omega\otimes v)=\omega\otimes\partial_hv+I_hv$ for
$v\in X_h$.
\end{proof}

In view of the results of {\rm Lemma}~\ref{ld.4.5},
\eqref{MaxL2Bound} implies that
\begin{equation*}
\displaystyle{\max_{0\leq n\leq{\ssy N}}}\|U^n-u^n\|_{\ssy
0,h}=\mathcal{O}(k^2+h^2).
\end{equation*}
Also, the estimates \eqref{MaxMaxBound}, \eqref{CCError_1}
and \eqref{CCError_700} yield
\begin{equation*}
\displaystyle{\max_{0\leq n\leq{\ssy N}}}
|\omega\otimes e^{n-\frac{1}{2}}|_{\ssy
\infty,h}=\mathcal{O}(k^2+h^2+h^3\,k^{-1})
\end{equation*}
which may be viewed as a discrete weighted maximum norm estimate
on $e^{n-\frac{1}{2}}$, which is of optimal order when $h=\mathcal{O}(k)$;
however does not yield an optimal order maximum norm estimate, due
to vanishing of the coefficient of $u_y$ in the p.d.e. in \eqref{NewProblem2}
at $y=0$.
%

%
%
%
%
%%%%%%%%%%%%%%%%%%%%%%%%%%%%%%%%%%%%%%%%
%
\section{Numerical implementation}\label{sec4}
\subsection{The numerical scheme}
Using the definitions $\delta(r):=\tfrac{{\dot s}(r)}{s(r)}$,
$\zeta(r,y):=\tfrac{(1+q\,\gamma(r,y))\,s^2(r)}{\alpha^2\,q}$,
$\xi(r):=\tfrac{\lambda\,s^2(r)}{\alpha^2\,q^2}$, we note that
the p.d.e. in \eqref{NewProblem2} may written as
\begin{equation*}\label{n1}
-\zeta(r,y)G(r,y) -\partial_y^2G(r,y)=\cunit\xi(r)\,u,
\end{equation*}
where we have put
$G(r,y):=u_r(r,y)-\cunit\,\tfrac{\lambda}{q}\,u(r,y)-\delta(r)y\,u_y(r,y)$.
Motivated by this we rewrite the CNFD scheme
\eqref{FD1}-\eqref{FD21} in the following equivalent form using
the notation introduced in section~\ref{sec3} and putting
$U_j^{n-\frac{1}{2}}:=\tfrac{U_j^{n-1}+U_j^n}{2}$. We seek
$U_j^n$, $0\leq j\leq J$, $0\leq n\leq N$, approximating $u_j^n$
and given by the equations:\\
For $n=0$:
\begin{equation}\label{d5.1}
U_j^0=u_0(y_j)\quad\mbox{for}\quad 1\leq j\leq J-1,
\quad U_0^0=U_{\ssy J}^0=0.
\end{equation}
For $n=1,\dots,N$:
\begin{equation}\label{d5.2}
-\zeta(r^{n-\frac{1}{2}},y_j)\,G_j^n-\widetilde{\Delta}_h
G_j^n=\cunit\,\xi(r^{n-\frac{1}{2}})\,U_j^{n-\frac{1}{2}}
\quad\mbox{for}\quad 1\leq j\leq J-1,
\end{equation}
where
$$\widetilde{\Delta}_h G_j^n:=
  \begin{cases}
    \frac{G_2^n-2G_1^n}{h^2} & j=1, \\
    \frac{G_{j+1}^n-2G_j^n+G_{j-1}^n}{h^2} & j=2,\cdots,J-2,\\
    \frac{-2G_{J-1}^n+G_{J-2}^n}{h^2}  & j=J-1,
  \end{cases}$$
with
\begin{equation*}
G_j^n:=\tfrac{{U_j^n}-U_j^{n-1}}{k}-\cunit\,\tfrac{\lambda}{q}\,U_j^{n-\frac{1}{2}}
-\delta(r^{n-\frac{1}{2}})\,y_j\,\tfrac{U_{j+1}^{n-\frac{1}{2}}-U_{j-1}^{n-\frac{1}{2}}}{2h}
\quad\mbox{for}\quad j=1,\dots,J-1.
\end{equation*}
The scheme requires  solving a pentadiagonal linear system of algebraic equations
at each time step.
%
%%%%%%%%%%%%%%%%%%%%%%%%%%%%%%%%%%%%%%%%%%%%%
\subsection{Numerical experiments}
%%%%%%%%%%%%%%%%%%%%%%%%%%%%%%%%%%%%%%%%%%%%%
%
We implemented the finite difference scheme CNFD in the form
\eqref{d5.1}-\eqref{d5.2} in a double precision FORTRAN 77 code
using it in various numerical examples to test its accuracy and
stability. We used the function $u(r,y)=\exp(2r)(y-1)\sin(2\pi
y)$ as exact solution of \eqref{NewProblem2} (with an appropriate
nonhomogeneous term in the right-hand side of the p.d.e.),
putting $\gamma(r,y)=1+y$, $\alpha=2$, $p=q+\frac{1}{2}$, and selecting
$q=(0.252252311,-1.35135138\,e-2)$, \cite{Co}.
We experimented with several bottom profiles. For example, in the case of the
downsloping bottom given by $s(r)=\exp(r)$, $0\leq r\leq 1$, we
obtained at $r=1$ the errors in the discrete $L^2$ and
$L^{\infty}$ norms shown in Table~\ref{pinakas1} together with the associated
experimental convergence rates. (We took $h=k=\frac{1}{J}$ for the values
of $J$ shown). The convergence rates of the table are practically
equal to 2 and consistent with the predictions of the theory. We
also tried bottom profiles $s(r)$ given for $0\leq r\leq 1$ by $r+2$,
$-r+2$, $-\exp(-r)$, $\cos(2\pi r)+2$, i.e. upsloping and
oscillating profiles as well, and found experimentally that the
scheme was stable for $k=h$ and that the $L^2$- and
$L^{\infty}$-convergence rates were practically equal to $2$
again. Hence it seems that although CNFD is designed to
approximate the ibvp \eqref{NewProblem2} in the downsloping
bottom case, it is resilient enough in upsloping and
non-monotonic bottom problems as well.
\begin{table}[b]
\begin{center}
\begin{tabular}{|c|c|c|c|c|}\hline
  % after \\: \hline or \cline{col1-col2} \cline{col3-col4} ...
   $J$& $L^2$-error & $L^2$-rate  & $L^{\infty}$-error & $L^{\infty}$-rate\\ \hline
  40 &  0.2510\,e-1 & & 0.2493\,e-1 & \\ \hline
  80 & 0.6424\,e-2 &1.966 & 0.6365\,e-2 & 1.969\\ \hline
 160 &  0.1627\,e-2&1.981 & 0.1609\,e-2 & 1.983\\ \hline
 320 &  0.4097\,e-3&1.990 & 0.4048\,e-3 & 1.991\\ \hline
 640 &  0.1028\,e-3&1.995 & 0.1015\,e-3 & 1.995\\ \hline
1280 &  0.2574\,e-4&1.997 &0.2542\,e-4 &1.998\\ \hline
\end{tabular}
\end{center}
\vspace{0.4cm} \caption{Discrete $L^2$- and $L^{\infty}$-errors
and rates at $r=1$, $s(r)=\exp(r)$, $k=h=\frac{1}{J}$.}\label{pinakas1}
\end{table}
\par
Although the ibvp \eqref{NewProblem2} is $L^2$-conservative, in
the sense that its solution preserves the quantity
\begin{equation*}
\sqrt{s(r)}\|u(r,\cdot)\|_{L^2(I)}
\end{equation*}
for real $\gamma$ and $q$, CNFD does not share a corresponding
discrete property. For example, when we integrated numerically
\eqref{NewProblem2} with $\alpha=10$, $q=\frac{1}{4}$, $p=q+\frac{1}{2}$,
$\gamma(r,y)=1+y$, $u_0(y)=y^2(y-1)$, $0\leq y\leq 1$,
we found that in the cases $s(r)=\exp(r)$ and $s(r)=r+2$, $0\leq r\leq 1$, the
quantity $\sqrt{s(r^n)}\|U^n\|_{\ssy 0,h}$ was preserved to $4$
significant digits.
\par
We also performed a simulation of a realistic underwater
acoustics problem using the CNFD scheme. We integrated the ibvp
\eqref{NewProblem2} using again
$q=(0.252252311,-1.35135138\,e-2)$, $p=q+\frac{1}{2}$, and considered a
straight downsloping bottom given by
$s(r)=200\,\left(1+\frac{r}{4000}\right)$ (distances in meters)
and making an angle of $2.86^\circ$ with respect to the
horizontal surface. We used as initial condition at $r=0$ the
normal mode starter given by formula $(45)$ of \cite{DSZ2009} with
$M=6$, simulating the initial field produced by a time-harmonic
point source of frequency ${\rm f}=25$ Hz located at a depth of
$z=100$ m. (We shall frequently refer in the sequel to
quantities expressed in the $r$, $z$ variables. Of course the
scheme is implemented in the $r$, $y$ variables and the results
are transformed from or into the $r$, $z$ domain as required.) We
assume that the medium is homogeneous and lossless with a sound
speed $c=c_0=1500\,{\rm m/sec}$, so that $\beta=0$. We integrated
the problem up to $R=3300\,$m using $k=0.83475\,$m, and $4000$
mesh intervals of equal length in $y$. As is customary in
underwater acoustics we present the numerical results in terms of
a one-dimensional transmission loss (TL) plot in the $r$, $z$
variables. (The TL function was computed by the formula
TL$=-20\,\log_{10}\,\left(\frac{|v(r,z_{{\rm rec}})|}{\sqrt{r}}\right)$ where
$z_{\rm rec}$ is a receiver depth.)
The graph of Figure~\ref{f2} shows as an example, the TL curves at
$z_{{\rm rec}}=30\,$m obtained by integrating
CNFD (solid line) and by the finite difference scheme (dotted
line, `DSZ scheme') proposed in \cite{DSZ2009} as a
discretization of a problem of the form \eqref{NewProblem2} but
with the bottom boundary condition (cf. Remark \ref{rd3.3})
\begin{equation*}
u_{yy}(r,1)={\rm i}\,\tfrac{2}{\alpha}\,s(r)\,{\dot s}(r)\,u_y(r,1),
\quad 0\leq r\leq R,
\end{equation*}
replacing $u_y(r,1)=0$. The results of the two schemes are in very
good agreement. (In \cite{DSZ2009}, the results of the DSZ scheme
were compared to those of a standard `staircase' wide-angle PE
code and were found to be in very good agreement.)
\begin{figure}[ht]
\centering
\includegraphics[width=0.85\textwidth,height=0.30\textheight]{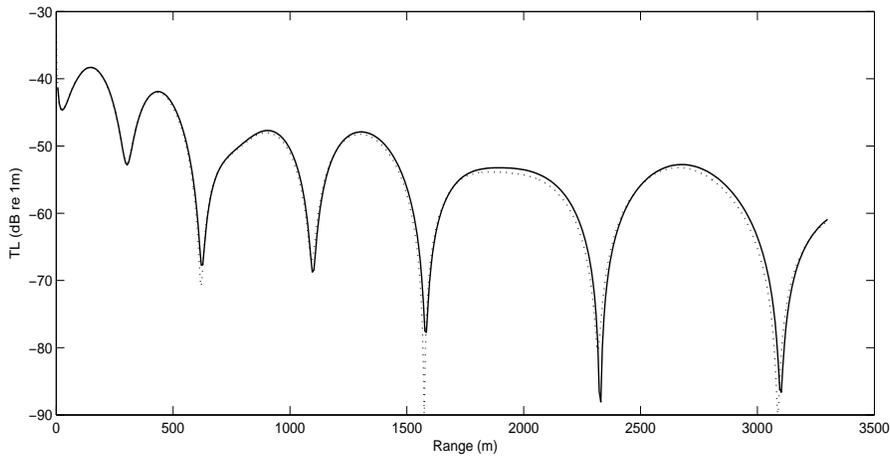}
\caption{TL at $z_{{\rm rec}}=30$ m, downsloping bottom. CNFD
(solid line), DSZ (dotted line).}\label{f2}
\end{figure}

\end{document}